\documentclass[a4paper,twoside,11pt]{article}
\usepackage[utf8]{inputenc}
\usepackage{graphicx,epstopdf}
\epstopdfsetup{
	suffix=,
}
\usepackage{amsmath}
\usepackage{amsthm}
\usepackage{amssymb}
\usepackage{color}
\usepackage{mathtools}
\usepackage{subfig}
\usepackage{siunitx}
\usepackage{amsthm}
\usepackage[font=footnotesize,labelfont=bf]{caption}

\usepackage{geometry,calc}
\usepackage{listings}

\lstset{
    basicstyle=\ttfamily\small, % Text monospațiat și font mic
    breaklines=true,            % Rupe liniile lungi automat
    frame=single                % Cadru în jurul textului
}

\setlength{\topmargin}{-0.25in}
\setlength{\textheight}{8.9in}%{20.5cm}
\setlength{\oddsidemargin}{0.0in}
\setlength{\evensidemargin}{0.0in}

% --- Operator declarations ---

% --- Theorem environments ---
\newtheorem{theorem}{Theorem}[section]

\newtheorem{lemma}[theorem]{Lemma}
\theoremstyle{remark}
\newtheorem{remark}{Remark}

%%%%% COLOURS
\usepackage[most]{tcolorbox}
\definecolor{darkgreen}{rgb}{0.0, 0.55, 0.0}
      % Red text for comments/changes
      % Blue text for comments/changes
   % Magenta text for annotations
 % Dark green text for remarks

% --- Divergence operators ---

% --- Common macros/definitions ---

% --- ERROR MACROS and Other New Commands ---

% Inverse or error identifiers for indexed quantities:

% Macros for error measures (errors associated to different fields)

% --- (The rest of your macros can follow below) ---

% For example, additional vector and math shortcuts ...

%%
\title{Stochastic Production Planning with Regime Switching: Numerical and
Sensitivity Analysis, Optimal Control, and Python Implementation}
\author{Dragos-Patru Covei\thanks{Department of Applied Mathematics, The Bucharest University of Economic Studies, Piata Romana, No. 6, Bucharest, District 1, 010374, Romania}}

\date{}

\begin{document}
%%%%%%%%%%%%%%%%%%%%%%%%%%%%%%%%%%%%%
\maketitle
	
\begin{abstract}
\noindent
This study investigates a stochastic production planning problem with
regime-switching parameters, inspired by economic cycles impacting
production and inventory costs. The model considers types of goods and
employs a Markov chain to capture probabilistic regime transitions, coupled
with a multidimensional Brownian motion representing stochastic demand
dynamics. The production and inventory cost optimization problem is
formulated as a quadratic cost functional, with the solution characterized
by a regime-dependent system of elliptic partial differential equations
(PDEs). Numerical solutions to the PDE system are computed using a monotone
iteration algorithm, enabling quantitative analysis. Sensitivity analysis
and model risk evaluation illustrate the effects of regime-dependent
volatility, holding costs, and discount factors, revealing the conservative
bias of regime-switching models when compared to static alternatives.
Practical implications include optimizing production strategies under
fluctuating economic conditions and exploring future extensions such as
correlated Brownian dynamics, non-quadratic cost functions, and geometric
inventory frameworks. This research bridges the gap between theoretical
modeling and practical applications, offering a robust framework for dynamic
production planning.

\medskip\noindent
\textbf{AMS subject classification}: 49K20; 49K30; 90C31;90C31;90B30; 90C31; 90B30.
		
\medskip\noindent
\textbf{Keywords}: Quadratic cost functional; Sensitivity analysis; Optimizing production strategies
\end{abstract}

\section{Introduction}

Stochastic production planning has been extensively studied across various
applications and methodologies. Below is a synthesis of some significant
contributions from the literature relevant to this study: Bensoussan et al. 
\cite{BS} (addressed stochastic production planning with constraints, laying
the groundwork for production optimization under uncertainty), Cadenillas et
al. \cite{CLP} (explored optimal production management considering demand
variations influenced by business cycles, introducing regime-switching
dynamics to production planning), Cadenillas et al. \cite{CS} (presented
strategies for production management under regime switching with production
constraints, showcasing advanced mathematical frameworks), Dong et al. \cite%
{DMDK} (applied optimal production control theory to energy management in
microgrids, highlighting regime-switching dynamics in engineering systems),
Gharbi et al. \cite{GK} (investigated stochastic production control in
multi-product, multi-machine manufacturing systems, emphasizing inventory
optimization under dynamic conditions), Covei et al. \cite{CP} (introduced
elliptic PDEs within stochastic frameworks, contributing to the mathematical
modeling of production strategies), the paper \cite{CCP} (developed
stochastic production planning models with regime switching, offering
insights into inventory management under uncertainty), Ghosh et al. \cite%
{GAS} (focused on optimal control for switching diffusions, providing robust
methodologies applicable to flexible manufacturing systems), the author \cite%
{CDPCM} (analyzed parabolic PDEs for production planning problems, offering
valuable perspectives on dynamic system modeling).

These references form the basis of our research, guiding the mathematical
formulation and solution approach detailed in the subsequent sections.

In this study, we consider a stochastic production planning problem with
regime-switching parameters, motivated by scenarios where economic cycles
influence production and inventory costs. Regime-switching modeling, widely
applied across fields such as financial economics, civil engineering, and
manufacturing systems, provides robust tools for analyzing systems governed
by multiple dynamic regimes. Building on the existing body of work, this
paper presents a mathematical framework that defines optimal production
strategies using elliptic partial differential equations (PDEs). It serves
as a continuation of the study conducted in \cite{CCP}, which, however,
lacks any practical implementation of its findings.

Regime switching reflects scenarios where system characteristics change
depending on distinct states, such as economic growth vs. recession or high
vs. low demand periods. Recent studies in civil engineering have analyzed
optimal control problems for energy systems with peak and off-peak demand
regimes, while manufacturing systems have examined production strategies for
multiple machines subject to breakdowns. Among notable contributions is the
use of value function approaches with solutions characterized via elliptic
PDE systems. However, few works have investigated regime-switching
production planning in inventory systems, especially under stochastic
dynamics.

This paper addresses a stochastic production planning problem involving $%
N\geq 1$ types of goods stored in inventory, aiming to minimize production
and inventory costs over time under regime-switching economic parameters.
The optimization criterion is based on a quadratic cost functional
representing production and holding costs adjusted for stochastic demand,
with a stopping criterion terminating production when inventory levels
exceed a specified threshold. Regimes are modeled using a Markov chain
capturing probabilistic transitions between states, coupled with an $N$%
-dimensional Brownian motion accounting for stochastic demand fluctuations.

The stochastic dynamics of inventory levels are governed by:%
\begin{equation}
dy_{i}(t,\epsilon (t))=p_{i}(t,\epsilon (t))dt+\sigma _{\epsilon
(t)}dw_{i}(t),\text{ }y_{i}(0,\epsilon (0))=y_{i}^{0,\epsilon (0)}\text{,}%
\quad i=1,\dots ,N,  \label{di}
\end{equation}%
where $p_{i}$ is the deterministic production rate, $y_{i}(t,\epsilon (t))$
is an It\^{o} process in $\mathbb{R}$, $\sigma _{\epsilon (t)}$ is the
regime-dependent volatility, and $\epsilon (t)$ is a Markov chain
representing economic regimes.

The cost functional is defined as:

\begin{equation*}
J(p_{1},\dots ,p_{N})=\mathbb{E}\int_{0}^{\tau }\left(
\sum_{i=1}^{N}p_{i}^{2}(t,\epsilon (t))+f_{\epsilon (t)}(y(t,\epsilon
(t)))\right) e^{-\alpha _{\epsilon (t)}t}\,dt,
\end{equation*}%
subject to (\ref{di}) and the stopping time $\tau $, when inventory exceeds
a threshold $R>0$.

The structure of this paper is as follows: Section \ref{1} introduces the
mathematical formulation of the model and its objectives. Section \ref{2}
explains the methodology, including the derivation of the HJB equations and
the existence of solutions. Section \ref{3} focuses on the optimal control
of the problem at hand. Section \ref{4} details the numerical algorithm for
the obtained solution. Section \ref{5} provides a discussion on sensitivity
analysis, model examination, and visualization of the results. Section \ref%
{7} proposes future research directions. Section \ref{8} concludes with the
final observations, and Section \ref{9} concludes with the Python
implementation of the numerical algorithm. 

\section{Theoretical Framework \label{1}}

In this section, we present the mathematical formulation of the stochastic
production planning problem with regime switching. The model incorporates
random demand, regime-dependent parameters, and production controls, as
described below.

\subsection{Regime Switching and Dynamics}

We consider a probability space $(\Omega ,\mathcal{F},P)$ together with a
standard $\mathbb{R}^{N}$-valued Brownian motion $w=\{w_{t}:t\geq 0\}$ and
an observable finite-state continuous-time homogeneous Markov chain $%
\epsilon \left( t\right) =\{\epsilon _{t}:t\geq 0\}$.

We denote by $\mathcal{F}=\{\mathcal{F}_{t}:t\geq 0\}$ the $P$-augmentation
of the filtration $\{\mathcal{F}(w,\epsilon )_{t}:t\geq 0\}$ generated by
the Brownian motion and the Markov chain, where

\begin{equation*}
\mathcal{F}(w,\varepsilon )_{t}:=\sigma (\{w_{s},\epsilon _{s}:0\leq s\leq
t\})
\end{equation*}%
for every $t\geq 0$.

The manager of a firm wants to control the inventory of a given item. We
assume a stochastic production environment driven by two sources of
randomness:

\begin{itemize}
\item \textbf{Markov Chain:} A continuous-time homogeneous Markov chain $%
\epsilon(t) $, with states $\{1, 2\} $, represents economic regimes. These
regimes may correspond to scenarios such as economic growth ($\epsilon(t) =
1 $) or recession ($\epsilon(t) = 2 $).

\item \textbf{Brownian Motion:} An $N $-dimensional Brownian motion $w(t) =
(w_1(t), \dots, w_N(t)) $ models stochastic demand fluctuations in inventory
levels.
\end{itemize}

We also assume that $\epsilon $ and $w$ are independent, and that the Markov
chain $\epsilon $ has a strongly irreducible generator which is given by:

\begin{equation*}
A=%
\begin{bmatrix}
-a_{1} & a_{1} \\ 
a_{2} & -a_{2}%
\end{bmatrix}%
,\text{ (transition rate matrix of the Markov chain }\epsilon (t)\text{)}
\end{equation*}%
where $a_{1}>0$ and $a_{2}>0$. In this case%
\begin{equation*}
\text{if }p_{t}\left( t\right) =E[\epsilon (t)]\in \mathbb{R}^{2}\quad \text{%
then}\quad \frac{dp_{t}\left( t\right) }{dt}=A\epsilon (t),
\end{equation*}%
and $\epsilon \left( t\right) $ is explicitly described by the integral form%
\begin{equation*}
\epsilon \left( t\right) =\epsilon \left( 0\right) +\int_{0}^{t}A\epsilon
\left( s\right) ds+M\left( t\right) ,
\end{equation*}%
where $M(t)$ is a martingale with respect to $\mathcal{F}$.

\subsection{Inventory Dynamics and State Variables}

Let $y_{i}(t)$ denote the inventory levels of good $i$ at time $t$, adjusted
for demand, and let $p_{i}(t,\epsilon (t))$ denote the production rate
(control variable) for good $i$ at time $t$ under regime $\epsilon (t)$. The
stochastic dynamics of the inventory are governed by:

\begin{equation}
dy_{i}(t)=p_{i}(t,\epsilon (t))dt+\sigma _{\epsilon (t)}dw_{i}(t),\text{ }%
y_{i}(0)=y_{i}^{0}\text{,}\quad i=1,\dots ,N,  \label{sd}
\end{equation}%
where: $\sigma _{\epsilon (t)}$ is the regime-dependent volatility, $%
w_{i}(t) $ is the $i$-th component of an $N$-dimensional Brownian motion, $%
\epsilon (t)$ is a Markov chain representing economic regimes and $y_{i}^{0}$
denote the initial inventory level of good $i$.

\subsection{Objective Function}

The objective of the stochastic production planning problem is to minimize
the total expected cost incurred over time. These costs include both
production costs and inventory holding costs, adjusted for regime-switching
dynamics and exponential discounting. This is formalized through the
following components:

\subsubsection*{Production Costs}

The cost associated with the production rate $p_i(t, \epsilon(t)) $ for good 
$i $ is quadratic and regime-dependent. The quadratic form ensures
tractability in optimization and is expressed as:

\begin{equation*}
C_{p}(t)=\sum_{i=1}^{N}p_{i}^{2}(t,\epsilon (t)),
\end{equation*}%
where $p_{i}(t,\epsilon (t))$ represents the net production rate (actual
production minus demand).

\subsubsection*{Inventory Costs}

The holding cost for storing the inventory is modeled as a convex function
of the inventory levels. It accounts for regime-switching parameters and is
given by:

\begin{equation*}
C_{h}(t)=f_{\epsilon (t)}(y(t,\epsilon (t))),
\end{equation*}%
where $f_{\epsilon (t)}(\cdot )$ represents regime-dependent holding costs.
The convexity of $f_{\epsilon (t)}(\cdot )$ reflects the increasing marginal
cost of holding excess inventory.

\subsubsection*{Discount Factor}

To account for the time value of money, the costs are exponentially
discounted with a regime-dependent discount rate $\alpha _{\epsilon (t)}$.
The discount factor ensures that costs incurred in the future are valued
less than those incurred immediately.

\subsubsection*{Cost Functional}

The factory aims to minimize production and inventory costs, subject to the
stochastic dynamics (\ref{sd}) described above. The total cost functional,
combining production costs, inventory costs, and exponential discounting, is
given by:

\begin{equation*}
J(p_{1},\dots ,p_{N})=\mathbb{E}\left[ \int_{0}^{\tau }\left(
\sum_{i=1}^{N}p_{i}^{2}(t,\epsilon (t))+f_{\epsilon (t)}(y(t,\epsilon
(t)))\right) e^{-\alpha _{\epsilon (t)}t}dt\right] ,
\end{equation*}%
where: $p_{i}(t,\epsilon (t))$ is the production rate for good $i$ at time $%
t $ under regime $\epsilon (t)$, $p_{i}^{2}(t,\epsilon (t))$ is the
quadratic production cost for good $i$, $\
\sum_{i=1}^{N}p_{i}^{2}(t,\epsilon (t))$ is the regime-dependent production
costs, modeled as quadratic functions of the production rate, $\
y(t,\epsilon (t))$ represents the inventory levels of goods, adjusted for
demand, $\ f_{\epsilon (t)}(y(t,\epsilon (t)))$: regime-dependent inventory
holding costs (holding cost, modeled as convex functions $f_{1}(x)$ and $%
f_{2}(x)$) and $\alpha _{\epsilon (t)}$: regime-dependent discount rate for
exponential discounting;

The stopping time $\tau $ is defined as the moment when the inventory
exceeds an exogenous threshold $R$, i.e.:

\begin{equation*}
\tau =\inf \{t>0:\Vert y(t,\epsilon (t))\Vert \geq R\}\text{, }y(t,\epsilon
(t))=\left( y_{1}(t,\epsilon (t)),...,y_{N}(t,\epsilon (t))\right) ,
\end{equation*}%
where $\left\Vert \circ \text{ }\right\Vert $ stands for the Euclidian norm.

\section{Optimization Problem \label{2}}

The primary objective of the stochastic production planning problem is to
minimize the total expected cost, which comprises both production and
inventory holding costs, subject to the constraints of stochastic inventory
dynamics and regime-switching parameters. This optimization problem is
formulated as follows.

\subsection{Optimization Objective}

The objective is to determine the optimal production rates $p_{1}(t,\epsilon
(t))$, ..., $p_{N}(t,\epsilon (t))$ that minimize the total cost functional $%
J$, while satisfying the stochastic inventory dynamics. Mathematically, this
is expressed as:

\begin{equation*}
\inf_{p_{1},\dots ,p_{N}}J(p_{1},\dots ,p_{N}),
\end{equation*}%
subject to the inventory dynamics:

\begin{equation*}
dy_i(t) = p_i(t, \epsilon(t)) dt + \sigma_{\epsilon(t)} dw_i(t), \quad
y_i(0) = y_i^0, \quad i = 1, \dots, N.
\end{equation*}

The optimization problem is solved over a finite horizon, up to the stopping
time $\tau $, and incorporates the effects of regime switching. The
constraints ensure that the optimization respects the stochastic nature of
the inventory dynamics and the stopping criterion at $\tau $.

\subsection{Hamilton-Jacobi-Bellman Equations}

To solve the optimization problem, we employ the value function approach.
The value function is defined as:

\begin{equation}
z_{i}(x)=\inf_{p_{1},\dots ,p_{N}}\mathbb{E}\left[ \int_{0}^{\tau }\left(
\sum_{j=1}^{N}p_{j}^{2}(t,\epsilon (t))+f_{\epsilon (t)}(y(t,\epsilon
(t)))\right) e^{-\alpha _{\epsilon (t)}t}dt\Bigg|y(0)=x,\epsilon (0)=i\right]
.  \label{hjb}
\end{equation}

The HJB equations for the value functions $z_{1}(x)$ and $z_{2}(x)$,
corresponding to the two regimes $\epsilon (t)=1$ and $\epsilon (t)=2$, are
given by:%
\begin{equation}
\left\{ 
\begin{array}{ccc}
-a_{1}z_{2}+(a_{1}+\alpha _{1})z_{1}-\frac{{\sigma _{1}}^{2}}{2}\Delta
z_{1}-f_{1}\left( x\right) =-\frac{1}{4}\left\vert \nabla z_{1}\right\vert
^{2}, & \text{for} & x\in B_{R}, \\ 
-a_{2}z_{1}+(a_{2}+\alpha _{2})z_{2}-\frac{{\sigma _{2}}^{2}}{2}\Delta
z_{2}-f_{2}\left( x\right) =-\frac{1}{4}\left\vert \nabla z_{2}\right\vert
^{2}, & \text{for} & x\in B_{R},%
\end{array}%
\right.  \label{sp}
\end{equation}%
with boundary conditions:

\begin{equation*}
z_{1}(x)=z_{2}(x)=0,\quad \text{for }x\in \partial B_{R}.
\end{equation*}%
Here, $\ a_{1},a_{2},\alpha _{1},\alpha _{2},\sigma _{1},\sigma _{2}$ are
regime-dependent parameters, $\Delta z_{i}$ is the Laplacian of $z_{i}(x)$
(sum of second-order partial derivatives), $B_{R}$ is the open ball in $%
\mathbb{R}^{N}$ ($N\geq 1$) of radius $R>0$, and $f_{1}(x)$, $f_{2}(x)$ are
the holding cost functions in regimes 1 and 2, respectively.

\subsubsection{Assumptions}

To ensure mathematical tractability, we impose the following assumptions:

\begin{itemize}
\item \quad $f_{1}(x),$ $f_{2}(x)$ are continuous, convex functions
satisfying $f_{i}(x)\leq M_{i}x^{2}$, $i=1,2$;

\item \quad $\sigma _{\epsilon (t)}>0$ and $\alpha _{\epsilon (t)}>0$,
ensuring non-degenerate stochastic dynamics;

\item \quad boundary conditions: $z_{\epsilon (t)}=0$ when $t=\tau $, $%
i=1,\dots ,N$.
\end{itemize}

This formulation provides the mathematical foundation for deriving the
Hamilton-Jacobi-Bellman equations and solving the optimization problem.

The computational goal is to approximate the value functions $z_{1}(x)$ and $%
z_{2}(x)$ using numerical techniques that guarantee convergence and
stability.

The next section focuses on the methodology used to obtain the solutions.

\subsection{Transformation and Simplification}

To simplify the PDE system (\ref{sp}), we apply a change of variables:

\begin{equation*}
z_{j}(x)=-2\sigma _{j}^{2}\ln u_{j}(x),\text{ }j=1,2,
\end{equation*}%
which removes the gradient terms and transforms the PDE system into:%
\begin{equation}
\left\{ 
\begin{array}{ccc}
\Delta u_{1}\left( x\right) =u_{1}\left( x\right) \left[ \frac{1}{{\sigma
_{1}^{4}}}f_{1}\left( x\right) +\frac{2(a_{1}+\alpha _{1})}{{\sigma _{1}^{2}}%
}\ln u_{1}\left( x\right) -2a_{1}\frac{{\sigma _{2}^{2}}}{{\sigma _{1}^{4}}}%
\ln u_{2}\left( x\right) \right] , & for & x\in B_{R} \\ 
\Delta u_{2}\left( x\right) =u_{2}\left( x\right) \left[ \frac{1}{{\sigma }%
_{2}^{4}}f_{2}\left( x\right) +\frac{2(a_{2}+\alpha _{2})}{{\sigma _{2}^{2}}}%
\ln u_{2}\left( x\right) -2a_{2}\frac{{\sigma _{1}^{2}}}{{\sigma _{2}^{4}}}%
\ln u_{1}\left( x\right) \right] , & for & x\in B_{R}%
\end{array}%
\right.  \label{sl}
\end{equation}%
with boundary conditions:

\begin{equation*}
u_{1}(x)=u_{2}(x)=1,\quad x\in \partial B_{R}.
\end{equation*}%
This transformation reduces the complexity of the system and facilitates
numerical computation.

\subsection{Existence and Uniqueness of Solutions}

The solution's computation involved specific parameters that had to be
determined in an approximately exact form. In the paper \cite{CCP}, we
established only the existence of these parameters. Therefore, it becomes
essential to provide a proof of the results that will facilitate our
computational technique. Consequently, to implement our main findings, we
demonstrate the following practical result:

\begin{lemma}
\label{l1}For any $a_{1},\alpha _{1},a_{2},\alpha _{2},\sigma _{1},\sigma
_{2},M_{1},M_{2},R\in (0,\infty )$ and $N\in \mathbb{N}^{\ast }$, there
exist and are unique $K_{1},K_{2}\in (-\infty ,0)$ such that: 
\begin{equation}
\left\{ 
\begin{array}{l}
4K_{1}^{2}+\frac{2(a_{1}+\alpha _{1}){\sigma _{1}^{2}}}{{\sigma _{1}^{4}}}%
K_{1}-\frac{M_{1}}{{\sigma _{1}^{4}}}-\frac{2a_{1}{\sigma _{2}^{2}}}{{\sigma
_{1}^{4}}}K_{2}=0, \\ 
4K_{2}^{2}+\frac{2(a_{2}+\alpha _{2}){\sigma _{2}^{2}}}{{\sigma _{2}^{4}}}%
K_{2}-\frac{M_{2}}{{\sigma _{2}^{4}}}-\frac{2a_{2}{\sigma _{1}^{2}}}{{\sigma
_{2}^{4}}}K_{1}=0, \\ 
-\frac{2(a_{1}+\alpha _{1})R^{2}}{{\sigma _{1}^{2}}}K_{1}-2K_{1}N+\frac{%
2a_{1}{\sigma _{2}^{2}}R^{2}}{{\sigma _{1}^{4}}}K_{2}\geq 0, \\ 
-\frac{2(a_{2}+\alpha _{2})R^{2}}{{\sigma _{2}^{2}}}K_{2}-2NK_{2}+\frac{%
2a_{2}{\sigma _{1}^{2}}R^{2}}{{\sigma _{2}^{4}}}K_{1}\geq 0.%
\end{array}%
\right.  \label{non}
\end{equation}
\end{lemma}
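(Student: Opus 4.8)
The four relations in (\ref{non}) are the algebraic backbone of a subsolution for the transformed system (\ref{sl}): substituting the ansatz $\bar u_j(x)=\exp(K_j(R^{2}-|x|^{2}))$ into (\ref{sl}) makes $\Delta\bar u_j$ and the $\ln\bar u_j$-terms affine in $|x|^{2}$, so (after $f_j(x)\le M_j|x|^{2}$) the subsolution inequality splits into a matching of the coefficient of $|x|^{2}$, which is the first two lines of (\ref{non}), and nonnegativity of the constant term, which is the last two lines, the boundary value being automatically $1$; the sign condition $K_j<0$ is what makes $\bar u_j<1$ inside $B_R$. Thus the plan is to show the system in (\ref{non}) has exactly one solution with $K_1,K_2<0$. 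First I would collapse the inequalities: from the first equality, $\tfrac{2a_1\sigma_2^{2}}{\sigma_1^{4}}K_2=4K_1^{2}+\tfrac{2(a_1+\alpha_1)}{\sigma_1^{2}}K_1-\tfrac{M_1}{\sigma_1^{4}}$, and inserting this into the third line (then dividing by $R^{2}$) the $K_1$-linear terms cancel, leaving $4K_1^{2}-\tfrac{2N}{R^{2}}K_1-\tfrac{M_1}{\sigma_1^{4}}\ge0$; this convex quadratic is negative at $K_1=0$ and tends to $+\infty$ as $K_1\to-\infty$, so for $K_1<0$ it is equivalent to $K_1\le\rho_1:=\tfrac{N}{4R^{2}}-\tfrac14\sqrt{\tfrac{N^{2}}{R^{4}}+\tfrac{4M_1}{\sigma_1^{4}}}<0$, and symmetrically the fourth line reduces to $K_2\le\rho_2<0$ (same formula with index $2$). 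Hence (\ref{non}) is equivalent to the two equalities plus $K_1\le\rho_1$ and $K_2\le\rho_2$ --- which is also the explicit form used in the numerics.

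For existence I would rescale $x:=\sigma_1^{2}K_1$, $y:=\sigma_2^{2}K_2$, turning the two equalities into $y=g(x)$ and $x=h(y)$ with $g(x)=a_1^{-1}(2x^{2}+(a_1+\alpha_1)x-\tfrac{M_1}{2})$ and $h(y)=a_2^{-1}(2y^{2}+(a_2+\alpha_2)y-\tfrac{M_2}{2})$, upward parabolas with $g(0),h(0)<0$. Since $K_2<0$ forces $g(x)<0$, i.e. $x\in(\bar r_1,0)$ with $\bar r_1<0$ the negative root of $2t^{2}+(a_1+\alpha_1)t-\tfrac{M_1}{2}$, and since an elementary estimate on the radical in $\rho_1$ gives $\bar r_1<\sigma_1^{2}\rho_1$, the interval $I:=(\bar r_1,\sigma_1^{2}\rho_1]\subset(-\infty,0)$ is non-empty. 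On $I$ I would analyse $\Gamma(x):=h(g(x))-x$ (a quartic, obtained by eliminating $y$): at the left end $\Gamma(\bar r_1)=h(0)-\bar r_1=-\tfrac{M_2}{2a_2}-\bar r_1$, and at the right end, using that $x=\sigma_1^{2}\rho_1$ solves $4x^{2}-\tfrac{2N\sigma_1^{2}}{R^{2}}x-M_1=0$, one gets $g(\sigma_1^{2}\rho_1)=a_1^{-1}(\tfrac{N\sigma_1^{2}}{R^{2}}+a_1+\alpha_1)\sigma_1^{2}\rho_1<0$ and hence an explicit $\Gamma(\sigma_1^{2}\rho_1)=h(g(\sigma_1^{2}\rho_1))-\sigma_1^{2}\rho_1$; a sign change of $\Gamma$ across $I$ then gives, by the intermediate value theorem, a root $x^{\star}\in I$, and $y^{\star}=g(x^{\star})\le\sigma_2^{2}\rho_2$ is checked separately, so $(K_1,K_2)=(x^{\star}/\sigma_1^{2},y^{\star}/\sigma_2^{2})$ solves (\ref{non}). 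Existence alone is in any case covered by \cite{CCP}.

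For uniqueness, let $(x,y)$ and $(x',y')$ be two admissible rescaled solutions; subtracting the two copies of each equation yields
\begin{gather*}
a_1(y-y')=(x-x')\bigl(2(x+x')+a_1+\alpha_1\bigr),\\
a_2(x-x')=(y-y')\bigl(2(y+y')+a_2+\alpha_2\bigr).
\end{gather*}
If $x=x'$ the first forces $y=y'$ (and symmetrically); otherwise both differences are nonzero, and multiplying the two identities gives $\bigl(2(x+x')+a_1+\alpha_1\bigr)\bigl(2(y+y')+a_2+\alpha_2\bigr)=a_1a_2$, which I would show is impossible on the admissible set $x,x'\le\sigma_1^{2}\rho_1<0$, $y,y'\le\sigma_2^{2}\rho_2<0$ (on which the two factors are constrained in size and sign), forcing $(x,y)=(x',y')$.

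The hard part is exactly this sign bookkeeping: showing that $\Gamma$ changes sign \emph{inside} $I$ rather than only outside (a fixed point of $h\circ g$ outside $I$ produces a non-admissible solution with a non-negative component), and ruling out the product identity in the uniqueness step --- both of which may need to be split into a few cases according to the position of $\sigma_1^{2}\rho_1,\sigma_2^{2}\rho_2$ relative to the vertices of $g,h$. This is precisely where the inequalities in (\ref{non}), equivalently the conditions $K_1\le\rho_1$, $K_2\le\rho_2$, are indispensable: the bare coupled quadratic system can have several real solutions, and those inequalities single out the one for which $\bar u_j=\exp(K_j(R^{2}-|x|^{2}))$ is an actual lower solution of (\ref{sl}).
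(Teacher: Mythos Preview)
Your route is quite different from the paper's, and in one respect more careful. The paper argues by treating each quadratic in isolation: it writes the negative roots $K_1^{*},K_2^{*}$ via discriminants that are themselves (circularly) expressed in terms of the other unknown, applies the intermediate value theorem to the one--variable residuals $R_1,R_2$, invokes ``monotonicity'' for uniqueness, and then asserts that the third and fourth lines of (\ref{non}) ``hold true for any $K_1,K_2\in(-\infty,0)$.'' Your reduction---substituting the first equation into the third inequality to obtain $4K_1^{2}-\tfrac{2N}{R^{2}}K_1-\tfrac{M_1}{\sigma_1^{4}}\ge 0$---is correct and in fact refutes that last assertion: as $K_1\to 0^{-}$ the left side tends to $-M_1/\sigma_1^{4}<0$, and more directly inequality~3 fails whenever $K_1$ is close to $0$ and $K_2$ is negative. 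So your reformulation via the explicit thresholds $K_i\le\rho_i$ isolates a genuine constraint that the paper's argument glosses over.

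That said, your own proposal is still a sketch at the two decisive points. For existence you compute the endpoint values of $\Gamma=h\circ g-\mathrm{id}$ on $I=(\bar r_1,\sigma_1^{2}\rho_1]$ but never determine their signs, deferring instead to \cite{CCP}; you would also have to check separately that $y^{\star}=g(x^{\star})$ actually lands below $\sigma_2^{2}\rho_2$, which is not automatic from $x^{\star}\in I$. For uniqueness, the product identity $\bigl(2(x+x')+a_1+\alpha_1\bigr)\bigl(2(y+y')+a_2+\alpha_2\bigr)=a_1a_2$ is not obviously impossible on the admissible set: each factor ranges over an interval that can contain both signs (for instance $2(x+x')+a_1+\alpha_1$ lies between $-\sqrt{(a_1+\alpha_1)^{2}+4M_1}$ and $4\sigma_1^{2}\rho_1+a_1+\alpha_1$), so their product can hit $a_1a_2$. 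Ruling this out requires using the two \emph{equations} jointly, not just the admissibility bounds, to control $x+x'$ and $y+y'$ simultaneously; the ``sign bookkeeping'' you flag as the hard part is really the core of the proof, not a cosmetic case split.
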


\begin{proof}
The inequalities are quadratic in nature with respect to $K_{1}$ and $K_{2}$%
. We analyze the first equation:%
\begin{equation*}
4K_{1}^{2}+\frac{2(a_{1}+\alpha _{1})\sigma _{1}^{2}}{\sigma _{1}^{4}}K_{1}-%
\frac{M_{1}}{\sigma _{1}^{4}}-\frac{2a_{1}\sigma _{2}^{2}}{\sigma _{1}^{4}}%
K_{2}=0.
\end{equation*}%
This can be rewritten in standard quadratic form:%
\begin{equation*}
4K_{1}^{2}+\frac{2(a_{1}+\alpha _{1})}{\sigma _{1}^{2}}K_{1}+\left( -\frac{%
M_{1}}{\sigma _{1}^{4}}-\frac{2a_{1}\sigma _{2}^{2}}{\sigma _{1}^{4}}%
K_{2}\right) =0.
\end{equation*}%
The discriminant of this quadratic equation is non-negative for $K_{1}$:%
\begin{equation*}
\Delta _{1}=\left( \frac{2(a_{1}+\alpha _{1})}{\sigma _{1}^{2}}\right)
^{2}-4\cdot 4\cdot \left( -\frac{M_{1}}{\sigma _{1}^{4}}-\frac{2a_{1}\sigma
_{2}^{2}}{\sigma _{1}^{4}}K_{2}\right) \geq 0,
\end{equation*}%
and so the equation have real solutions. A similar process applies to the
second equation:%
\begin{equation*}
\Delta _{2}=\left( \frac{2(a_{2}+\alpha _{2})}{\sigma _{2}^{2}}\right)
^{2}-4\cdot 4\cdot \left( -\frac{M_{2}}{\sigma _{2}^{4}}-\frac{2a_{2}\sigma
_{1}^{2}}{\sigma _{2}^{4}}K_{1}\right) \geq 0.
\end{equation*}%
Let:%
\begin{equation*}
K_{1}^{\ast }=\frac{-\frac{2(a_{1}+\alpha _{1})}{\sigma _{1}^{2}}-\sqrt{%
\Delta _{1}}}{8}\in \left( -\infty ,0\right) ,\quad K_{2}^{\ast }=\frac{-%
\frac{2(a_{2}+\alpha _{2})}{\sigma _{2}^{2}}-\sqrt{\Delta _{2}}}{8}\in
\left( -\infty ,0\right) .
\end{equation*}%
Define:%
\begin{equation*}
R_{1}(K_{1})=4K_{1}^{2}+\frac{2(a_{1}+\alpha _{1})\sigma _{1}^{2}}{\sigma
_{1}^{4}}K_{1}-\frac{M_{1}}{\sigma _{1}^{4}}-\frac{2a_{1}\sigma _{2}^{2}}{%
\sigma _{1}^{4}}K_{2}^{\ast },
\end{equation*}%
and%
\begin{equation*}
R_{2}(K_{2})=4K_{2}^{2}+\frac{2(a_{2}+\alpha _{2})\sigma _{2}^{2}}{\sigma
_{2}^{4}}K_{2}-\frac{M_{2}}{\sigma _{2}^{4}}-\frac{2a_{2}\sigma _{1}^{2}}{%
\sigma _{2}^{4}}K_{1}^{\ast }.
\end{equation*}%
Observe:%
\begin{equation*}
R_{1}(0)=-\frac{M_{1}}{\sigma _{1}^{4}}-\frac{2a_{1}\sigma _{2}^{2}}{\sigma
_{1}^{4}}K_{2}^{\ast }<0,\quad R_{2}(0)=-\frac{M_{2}}{\sigma _{2}^{4}}-\frac{%
2a_{2}\sigma _{1}^{2}}{\sigma _{2}^{4}}K_{1}^{\ast }<0.
\end{equation*}%
On the other hand:%
\begin{equation*}
\lim_{K_{1}\rightarrow -\infty }R_{1}(K_{1})=+\infty ,\quad
\lim_{K_{2}\rightarrow -\infty }R_{2}(K_{2})=+\infty .
\end{equation*}%
Thus, there exist $K_{1},K_{2}\in (-\infty ,0)$ such that the first and
second equations of the system are satisfied. By the monotonicity of $R_{1}$
and $R_{2}$, these solutions $K_{1}$ and $K_{2}$ are unique. It is evident
that the third and fourth inequalities of (\ref{non}) hold true for any $%
K_{1},K_{2}\in (-\infty ,0)$, and in particular, they are satisfied for the
specifically chosen parameters.
\end{proof}

We are now ready to adapt the proof of the Theorem in \cite{CCP},
integrating the necessary steps to address the numerical implications.

\begin{theorem}
\label{main}The system of equations (\ref{sp}) has a unique positive
solution $(z_{1},z_{2})\in \lbrack C^{2}(B_{R})\cap C(\overline{B}_{R})]^{2}$
with value functions $z_{1}$ and $z_{2}$ such that 
\begin{equation*}
z_{i}\left( x\right) \leq B_{i}\left( x\right) ,\text{ where }B_{1}\left(
x\right) =-2\sigma _{1}^{2}K_{1}(R^{2}-|x|^{2})\text{, }B_{2}\left( x\right)
=-2\sigma _{2}^{2}K_{2}(R^{2}-|x|^{2})
\end{equation*}%
and $K_{1},K_{2}\in \left( -\infty ,0\right) $ are the unique solutions of
the nonlinear system (\ref{non}).
\end{theorem}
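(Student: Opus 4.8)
The plan is to work entirely with the transformed system $(\ref{sl})$. Setting $u_j=\exp(-z_j/(2\sigma_j^2))$, i.e. $z_j=-2\sigma_j^2\ln u_j$, I would first record that a pair $(z_1,z_2)\in[C^2(B_R)\cap C(\overline{B}_{R})]^2$ solves $(\ref{sp})$ with $z_1=z_2=0$ on $\partial B_R$ if and only if $(u_1,u_2)\in[C^2(B_R)\cap C(\overline{B}_{R})]^2$ is strictly positive and solves $(\ref{sl})$ with $u_1=u_2=1$ on $\partial B_R$; that $z_j>0$ in $B_R$ is equivalent to $0<u_j<1$ there; and that the claimed bound $z_i\le B_i$ is exactly $u_i(x)\ge e^{K_i(R^2-|x|^2)}$. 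So the goal reduces to producing a unique solution of $(\ref{sl})$ pinched between $\underline u_i(x):=e^{K_i(R^2-|x|^2)}$ and $\overline u_i:=1$, where $(K_1,K_2)\in(-\infty,0)^2$ is the pair furnished by Lemma~\ref{l1}.

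Next I would verify that $(\underline u_1,\underline u_2)$ and $(\overline u_1,\overline u_2)$ are ordered sub- and supersolutions of $(\ref{sl})$, which is a cooperative weakly coupled system: writing it as $\Delta u_i=g_i(x,u_1,u_2)$ one has $\partial_{u_2}g_1=-2a_1\sigma_2^2\sigma_1^{-4}u_1/u_2<0$ and $\partial_{u_1}g_2=-2a_2\sigma_1^2\sigma_2^{-4}u_2/u_1<0$, so in the form $-\Delta u_i=-g_i$ the off-diagonal coupling is nondecreasing. The constant pair $\overline u_i\equiv1$ is a supersolution because, the holding costs being nonnegative, $\Delta\overline u_i=0\le\sigma_i^{-4}f_i(x)=g_i(x,1,1)$. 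For $(\underline u_1,\underline u_2)$ I note $\underline u_i=1$ on $\partial B_R$, $\underline u_i\le1$ (since $K_i<0$ and $|x|\le R$), and $\underline u_i\ge e^{K_iR^2}>0$, so the logarithms in $(\ref{sl})$ stay smooth along the order interval. The computation that matters is $\Delta\underline u_i=e^{K_i(R^2-|x|^2)}\big(4K_i^2|x|^2-2NK_i\big)$; dividing the required inequality $\Delta\underline u_i\ge g_i(x,\underline u_1,\underline u_2)$ by the positive factor $e^{K_i(R^2-|x|^2)}$ and using $f_i(x)\le M_i|x|^2$ turns it into an inequality between two functions that are affine in $t:=|x|^2\in[0,R^2]$, hence it need only be checked at $t=0$ and $t=R^2$. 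At $t=0$ it is precisely the third (resp. fourth) inequality of $(\ref{non})$; at $t=R^2$, substituting the first (resp. second) equation of $(\ref{non})$ for $4K_i^2$, it again reduces to that same inequality. So Lemma~\ref{l1} supplies exactly the ordered barrier pair needed.

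Then existence follows from the classical monotone-iteration scheme for cooperative elliptic systems: rewrite $(\ref{sl})$ as $-\Delta u_i+cu_i=cu_i-g_i(x,u)$ with $c$ chosen so large that the right-hand side is nondecreasing in every $u_j$ on $[e^{K_iR^2},1]^2$, iterate the resulting sequence of linear Dirichlet problems from the supersolution downward and from the subsolution upward, and pass to the limit using interior Schauder and $L^p$ estimates; this yields a solution $(u_1,u_2)$ with $\underline u_i\le u_i\le1$, automatically classical and positive. For uniqueness I would argue directly on $(\ref{sp})$: if $(z_1,z_2)$ and $(\widehat z_1,\widehat z_2)$ are two solutions, then $w_i:=z_i-\widehat z_i$ vanishes on $\partial B_R$ and, via $|\nabla z_i|^2-|\nabla\widehat z_i|^2=\nabla(z_i+\widehat z_i)\cdot\nabla w_i$, satisfies the linear weakly coupled cooperative system $-\tfrac{\sigma_i^2}{2}\Delta w_i+b_i\cdot\nabla w_i+(a_i+\alpha_i)w_i-a_iw_j=0$ with $b_i:=\tfrac14\nabla(z_i+\widehat z_i)$; since $\alpha_1,\alpha_2>0$, the zeroth-order matrix $\big(\begin{smallmatrix}a_1+\alpha_1&-a_1\\-a_2&a_2+\alpha_2\end{smallmatrix}\big)$ has $(1,1)$ as a strict positive supersolution, so the maximum principle for such systems forces $w_1\equiv w_2\equiv0$. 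Undoing the substitution, $z_i=-2\sigma_i^2\ln u_i$ is the unique positive solution of $(\ref{sp})$; $z_i=0$ on $\partial B_R$ and $z_i\le B_i$ are immediate, while $z_i>0$ in $B_R$ follows from the strong maximum principle applied to the identity $-\tfrac{\sigma_i^2}{2}\Delta z_i+\tfrac14\nabla z_i\cdot\nabla z_i+(a_i+\alpha_i)z_i=a_iz_j+f_i\ge0$.

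The step that looks hardest — the subsolution differential inequality for $\underline u_i$ — is actually defused by the observation that it is affine in $|x|^2$, so it collapses onto the two endpoint algebraic conditions already contained in $(\ref{non})$. The genuinely delicate points I expect are (i) running the monotone iteration inside a set where the singular $\ln u_i$ nonlinearity is smooth and Lipschitz, which is exactly what the strictly positive lower barrier $\underline u_i\ge e^{K_iR^2}$ guarantees, and (ii) the uniqueness argument, which works only because the strict diagonal dominance $a_i+\alpha_i>a_i$ is available, i.e. because the discount rates $\alpha_i$ are strictly positive.
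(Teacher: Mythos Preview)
Your proposal is correct and follows essentially the same route as the paper: pass to the transformed system $(\ref{sl})$, use the explicit pair $(\underline u_i,\overline u_i)=(e^{K_i(R^2-|x|^2)},1)$ as ordered sub- and supersolutions with $(K_1,K_2)$ supplied by Lemma~\ref{l1}, run a monotone iteration, and conclude uniqueness via the maximum principle. Your write-up is in fact more complete than the paper's at two points: you actually verify the subsolution differential inequality (the paper merely asserts the choice works), and your observation that the inequality is affine in $|x|^2$, hence reduces to the two endpoint conditions already encoded in $(\ref{non})$, is exactly the mechanism behind Lemma~\ref{l1}; you also spell out the uniqueness argument on $(\ref{sp})$ via the cooperative linear system for $w_i=z_i-\widehat z_i$, whereas the paper simply invokes the maximum principle in one line.
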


\begin{proof}
Our constructive approach aims to develop a computational scheme for
numerical approximations of the solution. Since the system (\ref{sp}) is
equivalent to (\ref{sl}), we will focus on the latter. The approach involves
three key steps:

\textbf{Step 1:} Sub-solution and Super-solution Construction.

The main problem reduces to constructing functions $(u_{1},u_{2})$ as
sub-solutions (and $(u_{1},u_{2})$ as super-solutions) for the system (\ref%
{sl}) that satisfy the inequalities:%
\begin{equation*}
\left\{ 
\begin{array}{lll}
\Delta u_{1}(x)\geq u_{1}(x)\left[ \frac{1}{\sigma _{1}^{4}}f_{1}(x)+\frac{%
2(a_{1}+\alpha _{1})}{\sigma _{1}^{2}}\ln u_{1}(x)-\frac{2a_{1}\sigma
_{2}^{2}}{\sigma _{1}^{4}}\ln u_{2}(x)\right] , & for & x\in B_{R}, \\ 
\Delta u_{2}(x)\geq u_{2}(x)\left[ \frac{1}{\sigma _{2}^{4}}f_{2}(x)+\frac{%
2(a_{2}+\alpha _{2})}{\sigma _{2}^{2}}\ln u_{2}(x)-\frac{2a_{2}\sigma
_{1}^{2}}{\sigma _{2}^{4}}\ln u_{1}(x)\right] , & for & x\in B_{R}, \\ 
(u_{1}(x),u_{2}(x))=\left( 1,1\right) , & for & x\in \partial B_{R},%
\end{array}%
\right.
\end{equation*}%
(and similarly for the inequalities with $\leq $). For sub-solutions, choose:%
\begin{equation*}
(\underline{u}_{1}(x),\underline{u}_{2}(x))=\left(
e^{K_{1}(R^{2}-|x|^{2})},e^{K_{2}(R^{2}-|x|^{2})}\right) ,
\end{equation*}%
where $K_{1},K_{2}\in (-\infty ,0)$ are solutions of (\ref{non}). For
super-solutions, choose:%
\begin{equation*}
(\overline{u}_{1}(x),\overline{u}_{2}(x))=(1,1).
\end{equation*}%
Clearly,%
\begin{equation*}
\underline{u}_{1}(x)\leq \overline{u}_{1}(x)\text{ and }\underline{u}%
_{2}(x)\leq \overline{u}_{2}(x)\text{, for any }x\in \overline{B}_{R}.\text{ 
}
\end{equation*}%
\textbf{Step 2:} Approximation Scheme.

Construct sequences $\{(u_{1}^{k},u_{2}^{k})\}_{k\in \mathbb{N}}$ via
monotone Picard iterations starting with:%
\begin{equation*}
(u_{1}^{0},u_{2}^{0})=(\underline{u}_{1},\underline{u}_{2}),\quad x\in 
\overline{B}_{R}.
\end{equation*}%
Define the iteration:%
\begin{equation*}
\left\{ 
\begin{array}{ccc}
\Delta u_{1}^{k}+\Lambda
_{1}u_{1}^{k}=g_{1}(x,u_{1}^{k-1},u_{2}^{k-1})+\Lambda _{1}u_{1}^{k-1}, & for
& x\in B_{R}, \\ 
\Delta u_{2}^{k}+\Lambda
_{2}u_{2}^{k}=g_{2}(x,u_{1}^{k-1},u_{2}^{k-1})+\Lambda _{2}u_{2}^{k-1}, & for
& x\in B_{R},%
\end{array}%
\right.
\end{equation*}%
where for $S_{1}=e^{K_{1}R^{2}}$, $S_{2}=e^{K_{2}R^{2}}$, and $S=1$ the
functions:%
\begin{equation*}
g_{1}:\overline{B}_{R}\times \lbrack S_{1},S]\times \lbrack
S_{2},S]\rightarrow \mathbb{R}\text{ and }g_{2}:\overline{B}_{R}\times
\lbrack S_{2},S]\times \lbrack S_{1},S]\rightarrow \mathbb{R}
\end{equation*}%
are defined by: 
\begin{eqnarray}
g_{1}\left( x,t,s\right) &=&\frac{1}{\sigma _{1}^{4}}f_{1}\left( x\right) t+%
\frac{2(a_{1}+\alpha _{1})}{{\sigma _{1}^{2}}}t\ln t-\frac{2a_{1}{\sigma
_{2}^{2}}}{{\sigma _{1}^{4}}}t\ln s,\text{ }  \notag \\
&&  \label{26a} \\
g_{2}\left( x,t,s\right) &=&\frac{1}{{\sigma }_{2}^{4}}f_{2}\left( x\right)
s+\frac{2(a_{2}+\alpha _{2})}{{\sigma _{2}^{2}}}s\ln s-\frac{2a_{2}{\sigma
_{1}^{2}}}{{\sigma _{2}^{4}}}s\ln t.  \notag
\end{eqnarray}%
Since $g_{1}$ (respectively $g_{2}$) is a continuous function with respect
to the first variable in $B_{R}$ and continuously differentiable with
respect to the second and third in $[S_{1},S]\times \lbrack S_{2},S]$
(respectively, $[S_{2},S]\times \lbrack S_{1},S]$), it allows choosing $%
\Lambda _{1},\Lambda _{2}\in (-\infty ,0)$ such that: 
\begin{equation}
-\Lambda _{1}\geq \frac{g_{1}\left( x,t_{1},s\right) -g_{1}\left(
x,t_{2},s\right) }{t_{2}-t_{1}}\text{ (respectively}-\Lambda _{2}\geq \frac{%
g_{2}\left( x,t,s_{1}\right) -g_{2}\left( x,t,s_{2}\right) }{s_{2}-s_{1}}%
\text{)}  \label{27a}
\end{equation}%
for every $t_{1},t_{2}$ with $\underline{u}_{1}\leq t_{2}<t_{1}\leq 
\overline{u}_{1}$, $\underline{u}_{2}\leq s\leq \overline{u}_{2}$ and $x\in 
\overline{B}_{R}$ (respectively for every $s_{1},s_{2}$ with $\underline{u}%
_{2}\leq s_{2}<s_{1}\leq \overline{u}_{2}$, $\underline{u}_{1}\leq t\leq 
\overline{u}_{1}$ and $x\in \overline{B}_{R})$, to ensure monotonicity:%
\begin{equation*}
(u_{1}^{k-1}\leq u_{1}^{k},\,u_{2}^{k-1}\leq u_{2}^{k})\implies
(u_{1}^{k}\leq u_{1}^{k+1},\,u_{2}^{k}\leq u_{2}^{k+1}),
\end{equation*}%
via mathematical induction and the maximum principle.

\textbf{Step 3: }Convergence and Uniqueness.

The sequences $\{u_{1}^{k},u_{2}^{k}\}_{k\in \mathbb{N}}$ converge
monotonically to bounded limits:%
\begin{equation*}
\lim_{k\rightarrow \infty
}(u_{1}^{k}(x),u_{2}^{k}(x))=(u_{1}(x),u_{2}(x)),\quad x\in \overline{B}_{R}.
\end{equation*}%
Standard bootstrap arguments ensure:%
\begin{equation*}
(u_{1}^{k},u_{2}^{k})\rightarrow (u_{1},u_{2}),\quad \text{in }%
[C^{2}(B_{R})\cap C(\overline{B}_{R})]^{2},
\end{equation*}%
and $(u_{1},u_{2})$ solves (\ref{sl}) with:%
\begin{equation*}
\underline{u}_{1}(x)\leq u_{1}(x)\leq \overline{u}_{1}(x),\quad \underline{u}%
_{2}(x)\leq u_{2}(x)\leq \overline{u}_{2}(x),\quad x\in \overline{B}_{R}.
\end{equation*}%
Uniqueness follows from the maximum principle: any two positive solutions $%
(u_{1},u_{2})$ and $(\widetilde{u}_{1},\widetilde{u}_{2})$ coincide.
\end{proof}

\section{Optimal Control\label{3}}

The optimal control represents the production rate policy that minimizes the
total expected cost functional. It is derived using the
Hamilton-Jacobi-Bellman (HJB) equations and is directly related to the
gradients of the value functions. Below, we delve deeper into its
formulation and derivation.

\subsection{Optimal Production Policy}

By differentiating the HJB equations with respect to the inventory levels $%
y_{i}$, we obtain the gradient terms that define the optimal control.

Hence, for each economic good $i=1,\dots ,N$, the optimal production rate $%
p_{i}^{\ast }(t,\epsilon (t))$ is given by:

\begin{equation*}
p_{i}^{\ast }(t,\epsilon (t))=-\frac{1}{2}\frac{\partial z_{\epsilon (t)}}{%
\partial y_{i}},\text{ }
\end{equation*}%
where: $z_{\epsilon (t)}$ is the value function corresponding to regime $%
\epsilon (t)$ and $\frac{\partial z_{\epsilon (t)}}{\partial y_{i}}$ denotes
the partial derivative of the value function with respect to the inventory
level of good $i$.

This result is obtained by solving the first-order optimality condition
derived from the HJB equations.

\subsubsection*{Economic Interpretation}

The negative gradient of the value function implies that the optimal
production rate decreases as the marginal cost of inventory increases.
Intuitively: higher inventory levels (positive gradient) lead to a reduction
in production to avoid excess costs and lower inventory levels (negative
gradient) necessitate an increase in production to meet anticipated demand.

\subsection{Verification of Optimality}

The verification of optimality establishes that the control $p^{\ast
}(t,\epsilon (t))$, derived from the Hamilton-Jacobi-Bellman (HJB)
equations, is indeed the optimal control that minimizes the cost functional.
This involves proving the supermartingale property of the value function for
all admissible controls and the martingale property for the optimal control.

\subsubsection*{The stochastic process}

To verify that $p_{i}^{\ast }(t,\epsilon (t))$ is indeed the optimal
control, we use the supermartingale and martingale properties of the value
function $z_{\epsilon (t)}(y)$. Let the stochastic process $Z_{p}(t)$ be
defined as:

\begin{equation}
Z_{p}(t)=-e^{-\alpha _{\epsilon (t)}t}z_{\epsilon (t)}(y(t,\epsilon
(t)))-\int_{0}^{t}\left( \sum_{i=1}^{N}p_{i}^{2}(s,\epsilon (s))+f_{\epsilon
(s)}(y(s,\epsilon (s)))\right) e^{-\alpha _{\epsilon (s)}s}ds.  \label{spa}
\end{equation}%
where: $z_{\epsilon (t)}(y)$ is the value function for regime $\epsilon (t)$%
, $p_{i}(t,\epsilon (t))$ are the production rates (control variables), $%
f_{\epsilon (t)}(y)$ is the holding cost function, and $\alpha _{\epsilon
(t)}$ is the regime-dependent discount rate.

Using It\^{o}'s Lemma, the time derivative of $Z_{p}(t)$ satisfies:

\begin{equation*}
dZ_{p}(t)=e^{-\alpha _{\epsilon (t)}t}\left[ \alpha _{\epsilon
(t)}z_{\epsilon (t)}(y)+\mathcal{L}z_{\epsilon
(t)}(y)-\sum_{i=1}^{N}p_{i}^{2}(t,\epsilon (t))-f_{\epsilon (t)}(y)\right]
dt+M(t),
\end{equation*}%
where $M(t)$ is a martingale term, and $\mathcal{L}z_{\epsilon (t)}(y)$
represents the generator of the Markov-modulated diffusion.

\subsubsection*{Supermartingale and Martingale Properties}

For the admissible controls $p_{i}(t,\epsilon (t))$, $Z_{p}(t)$ is a
supermartingale because $\mathcal{L}z_{\epsilon (t)}(y)$ satisfies the HJB
inequality:

\begin{equation*}
\alpha _{\epsilon (t)}z_{\epsilon (t)}(y)+\mathcal{L}z_{\epsilon
(t)}(y)-\sum_{i=1}^{N}p_{i}^{2}(t,\epsilon (t))-f_{\epsilon (t)}(y)\leq 0.
\end{equation*}

For the optimal control $p^*(t, \epsilon(t)) $, $Z_p(t) $ is a martingale
because $\mathcal{L}z_{\epsilon(t)}(y) $ satisfies the equality condition:

\begin{equation*}
\alpha _{\epsilon (t)}z_{\epsilon (t)}(y)+\mathcal{L}z_{\epsilon
(t)}(y)-\sum_{i=1}^{N}p_{i}^{\ast 2}(t,\epsilon (t))-f_{\epsilon (t)}(y)=0.
\end{equation*}

The optimal control $p^{\ast }(t,\epsilon (t))$ ensures that $Z_{p}(t)$ is a
martingale, while any other control $p(t,\epsilon (t))$ results in $Z_{p}(t)$
being a supermartingale. This validates the optimality of $p^{\ast
}(t,\epsilon (t))$.

\subsubsection*{Boundary Conditions and Optimality}

The boundary condition $z_{\epsilon(t)}(y) = 0 $ for $y \in \partial B_R $,
where $B_R $ is the ball of radius $R $, ensures that $Z_p(t) $ vanishes at
the stopping time $\tau $. Thus, for $t \geq \tau $, the contribution to the
cost functional ceases, confirming the proper termination of production when
inventory exceeds the threshold $R $.

\subsubsection*{Proof of Optimality}

The optimality of $p^{\ast }(t,\epsilon (t))$ is formalized through the
following theorem:

\begin{theorem}
\label{main2}Let $Z_{p}(t)$ be the stochastic process defined in (\ref{spa}%
). The control $p^{\ast }(t,\epsilon (t))$, derived from the HJB equations,
minimizes the cost functional $J(p_{1},\dots ,p_{N})$ and satisfies:%
\begin{equation*}
Z_{p^{\ast }}(t)=z_{\epsilon (t)}(y(0)).
\end{equation*}
\end{theorem}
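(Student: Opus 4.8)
The plan is to prove Theorem~\ref{main2} by the classical verification (martingale) argument, using the regularity $z_{i}\in C^{2}(B_{R})\cap C(\overline{B}_{R})$ and the a priori barrier bound $z_{i}\le B_{i}$ furnished by Theorem~\ref{main}. The displayed identity is to be understood as the martingale identity $\mathbb{E}\,[Z_{p^{\ast}}(t)]=Z_{p^{\ast}}(0)=-z_{\epsilon(0)}(y(0))$: since $Z_{p}(0)=-z_{\epsilon(0)}(y(0))$, the assertion is that $Z_{p^{\ast}}$ is a genuine martingale, while $Z_{p}$ is only a supermartingale for every admissible control $p$, and optimality follows by evaluating these at the stopping time $\tau$.

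First I would apply the It\^{o}--Dynkin formula for the Markov-modulated diffusion $(y(t),\epsilon(t))$ to the function $(t,x,i)\mapsto -e^{-\alpha_{i}t}z_{i}(x)$, which is admissible because each $z_{i}$ is $C^{2}$ on $B_{R}$. This yields
\[
dZ_{p}(t)=e^{-\alpha_{\epsilon(t)}t}\Big[\alpha_{\epsilon(t)}z_{\epsilon(t)}(y)+\mathcal{L}z_{\epsilon(t)}(y)-\sum_{i=1}^{N}p_{i}^{2}(t,\epsilon(t))-f_{\epsilon(t)}(y)\Big]dt+dM(t),
\]
where $\mathcal{L}$ gathers the diffusion term $\tfrac{\sigma_{\epsilon(t)}^{2}}{2}\Delta$, the control drift $p\cdot\nabla$, and the finite-state jump term generated by $A$, and $M$ is a local martingale consisting of the Brownian integral $\int_{0}^{\cdot}e^{-\alpha_{\epsilon(s)}s}\sigma_{\epsilon(s)}\nabla z_{\epsilon(s)}(y)\cdot dw$ together with the compensated-jump martingale of the chain $\epsilon$.

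Next I would fix the sign of the bracketed drift. Viewed as a function of the control $p$, the bracket is concave (a negative sum of squares plus a linear term in $p$); completing the square shows it is maximized at the feedback law $p_{i}^{\ast}=-\tfrac12\,\partial z_{\epsilon(t)}/\partial y_{i}$, where the maximal value contributes exactly the term $-\tfrac14|\nabla z_{\epsilon(t)}|^{2}$ appearing on the right-hand side of~(\ref{sp}). Hence, by the HJB system~(\ref{sp}), the drift is identically zero for $p=p^{\ast}$ and $\le 0$ for every other admissible control, so $Z_{p}$ is a local supermartingale and $Z_{p^{\ast}}$ a local martingale. To upgrade these to genuine statements I would localize along $\tau_{n}=\tau\wedge n\wedge\inf\{t:\,|M(t)|\ge n\}$ and pass to the limit using: $0\le z_{i}(x)\le B_{i}(x)$ from Theorem~\ref{main}; the growth bound $f_{i}(x)\le M_{i}x^{2}$; the fact that $y(t)\in\overline{B}_{R}$ for $t\le\tau$, so the state is bounded up to $\tau$; nondegeneracy $\sigma_{\epsilon}>0$, which gives $\tau<\infty$ a.s.\ (indeed $\mathbb{E}\,[\tau]<\infty$); and the admissibility assumption $\mathbb{E}\int_{0}^{\tau}\sum_{i}p_{i}^{2}\,e^{-\alpha_{\epsilon(s)}s}\,ds<\infty$ (otherwise $J(p)=+\infty$ and there is nothing to prove).

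Finally, evaluating at $t=\tau$ and using the boundary condition $z_{\epsilon(\tau)}(y(\tau))=0$ (as $y(\tau)\in\partial B_{R}$), one has $Z_{p}(\tau)=-\int_{0}^{\tau}\big(\sum_{i}p_{i}^{2}+f_{\epsilon(s)}(y)\big)e^{-\alpha_{\epsilon(s)}s}\,ds$, hence $\mathbb{E}\,[Z_{p}(\tau)]=-J(p_{1},\dots,p_{N})$. The supermartingale inequality gives $-J(p)\le Z_{p}(0)=-z_{\epsilon(0)}(y(0))$, that is $J(p)\ge z_{\epsilon(0)}(y(0))$ for all admissible $p$, while the martingale property for $p^{\ast}$ turns this into an equality, so $J(p^{\ast})=z_{\epsilon(0)}(y(0))=\inf_{p}J(p)$, which is the asserted optimality. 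I expect the main obstacle to be the integrability bookkeeping in the localization step: because $z_{i}$ is known only to be continuous up to $\partial B_{R}$, $\nabla z_{i}$ could be unbounded near the boundary, so establishing that the Brownian part of $M$ has finite quadratic variation on $[0,\tau]$—most cleanly by working on $[0,\tau_{n}]$ with $\tau_{n}\uparrow\tau$ and invoking monotone/dominated convergence together with the barrier bounds $B_{i}$ and $\mathbb{E}\,[\tau]<\infty$—and that the compensated-jump part is a true martingale, is where the real work lies.
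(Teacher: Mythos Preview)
Your proposal is correct and follows essentially the same verification route as the paper: establish the supermartingale/martingale dichotomy for $Z_{p}$ via It\^{o}'s formula and the HJB (in)equality, then evaluate at $\tau$ using the boundary condition $z_{\epsilon(\tau)}(y(\tau))=0$ to identify $J(p)$ and conclude optimality. In fact you are considerably more careful than the paper, which simply asserts the supermartingale/martingale properties without addressing the localization and integrability issues you flag as the main obstacle.
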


\begin{proof}
1.\quad For $Z_{p}(t)$ under the admissible control $p(t,\epsilon (t))$:%
\begin{equation*}
\mathbb{E}[Z_{p}(\tau )]\leq Z_{p}(0),
\end{equation*}%
since $Z_{p}(t)$ is a supermartingale.

2.\quad For $Z_{p^{\ast }}(t)$ under the optimal control $p^{\ast
}(t,\epsilon (t))$:%
\begin{equation*}
\mathbb{E}[Z_{p^{\ast }}(\tau )]=Z_{p^{\ast }}(0),
\end{equation*}%
since $Z_{p^{\ast }}(t)$ is a martingale.

3.\quad From the boundary condition $z_{\epsilon (t)}(y)=0$, it follows that:%
\begin{equation*}
\mathbb{E}\left[ \int_{0}^{\tau }\left( \sum_{i=1}^{N}p_{i}^{2}(s,\epsilon
(s))+f_{\epsilon (s)}(y(s,\epsilon (s)))\right) e^{-\alpha _{\epsilon
(s)}s}ds\right] =z_{\epsilon (t)}(y(0)).
\end{equation*}%
Thus, the control $p^{\ast }(t,\epsilon (t))$ minimizes the cost functional $%
J(p_{1},\dots ,p_{N})$ and satisfies the optimality condition.
\end{proof}

\subsubsection*{Theoretical Properties of the Optimal Control}

Theoretical properties of the optimal control are: the optimal control $%
p_{i}^{\ast }(t,\epsilon (t))$ is Lipschitz continuous in $y$, ensuring
stability in production rates under small changes in inventory levels, the
control policy is adaptive, responding dynamically to regime changes
governed by the Markov chain $\epsilon (t)$ and the quadratic nature of the
cost functional guarantees uniqueness of the optimal control.

\section{Numerical Computation\label{4}}

In this section, we present the numerical methodology to approximate the
solution of the system of elliptic partial differential equations (PDEs) (%
\ref{sp}) derived from the Hamilton-Jacobi-Bellman (HJB) equations (\ref{hjb}%
). This involves constructing sub- and super-solutions, implementing a
monotone iterative scheme, and addressing the computational challenges
inherent in the regime-switching framework.

\subsection{Mathematical Formulation of the Algorithm}

\subsubsection{Step 1: Steps to Find the Parameters $K_{1}$ and $K_{2}$}

\subparagraph{Step a. Define the Equations}

The parameters $K_{1}$ and $K_{2}$ are obtained by solving the following
system of nonlinear equations. The system of equations to solve for $K_{1}$
and $K_{2}$ is given by:%
\begin{equation*}
\left\{ 
\begin{array}{c}
4K_{1}^{2}+\frac{2(a_{1}+\alpha _{1})\sigma _{1}^{2}}{\sigma _{1}^{4}}K_{1}-%
\frac{M_{1}}{\sigma _{1}^{4}}-\frac{2a_{1}\sigma _{2}^{2}}{\sigma _{1}^{4}}%
K_{2}=0, \\ 
4K_{2}^{2}+\frac{2(a_{2}+\alpha _{2})\sigma _{2}^{2}}{\sigma _{2}^{4}}K_{2}-%
\frac{M_{2}}{\sigma _{2}^{4}}-\frac{2a_{2}\sigma _{1}^{2}}{\sigma _{2}^{4}}%
K_{1}=0.%
\end{array}%
\right.
\end{equation*}
These equations are solved numerically using the \texttt{fsolve} function
from Python.

\subparagraph{Step b. Choose an Initial Guess}

An initial guess for $K_1$ and $K_2$ must be provided to start the numerical
solution. Typically:

\begin{equation*}
K_{1}=K_{1}^{0}\in \left( -\infty ,0\right) ,\quad K_{2}=K_{2}^{0}\in \left(
-\infty ,0\right) .
\end{equation*}

\subparagraph{Step c. Solve the System Numerically}

The system is solved numerically using methods such as, iterative techniques
for nonlinear equations like Newton-Raphson, if implemented manually.

The \texttt{fsolve} function in Python is a numerical algorithm used to find
roots of a system of nonlinear equations. From a mathematical standpoint, it
operates as follows.

Let%
\begin{equation*}
\begin{array}{c}
h_{1}(K_{1},K_{2})=4K_{1}^{2}+\frac{2(a_{1}+\alpha _{1})\sigma _{1}^{2}}{%
\sigma _{1}^{4}}K_{1}-\frac{M_{1}}{\sigma _{1}^{4}}-\frac{2a_{1}\sigma
_{2}^{2}}{\sigma _{1}^{4}}K_{2}, \\ 
h_{2}(K_{1},K_{2})=4K_{2}^{2}+\frac{2(a_{2}+\alpha _{2})\sigma _{2}^{2}}{%
\sigma _{2}^{4}}K_{2}-\frac{M_{2}}{\sigma _{2}^{4}}-\frac{2a_{2}\sigma
_{1}^{2}}{\sigma _{2}^{4}}K_{1}.%
\end{array}%
\end{equation*}%
Consider a system of $2$ equations with $2$ variables:

\begin{equation}
\mathbf{H}(\mathbf{x})=%
\begin{bmatrix}
h_{1}(K_{1},K_{2}) \\ 
h_{2}(K_{1},K_{2})%
\end{bmatrix}%
=\mathbf{0},  \label{sr}
\end{equation}%
where $\mathbf{H}(\mathbf{x})$ is a vector-valued function, and $\mathbf{x}%
=[K_{1},K_{2}]^{\top }$ is the vector of unknowns and $\mathbf{0=}%
[0,0]^{\top }$. The goal of \texttt{fsolve} is to find $\mathbf{x}$ such
that $\mathbf{H}(\mathbf{x})=\mathbf{0}$.

The \texttt{fsolve} function in Python relies on iterative methods,
primarily Newton-Raphson, to approximate the solution of (\ref{sr}). The
iteration formula is:

\begin{equation*}
\mathbf{x}^{(k+1)}=\mathbf{x}^{(k)}-\mathbf{J}^{-1}(\mathbf{x}^{(k)})\mathbf{%
H}(\mathbf{x}^{(k)}),
\end{equation*}%
where: $\mathbf{x}^{(k)}$ is the solution estimate at iteration $k$, $%
\mathbf{J}(\mathbf{x})$ is the Jacobian matrix, defined as:%
\begin{equation*}
\mathbf{J}(\mathbf{x})=%
\begin{bmatrix}
\frac{\partial h_{1}}{\partial K_{1}} & \frac{\partial h_{1}}{\partial K_{2}}
\\ 
\frac{\partial h_{2}}{\partial K_{1}} & \frac{\partial h_{2}}{\partial K_{2}}%
\end{bmatrix}%
,
\end{equation*}%
and $\mathbf{J}^{-1}(\mathbf{x})$ is the inverse of the Jacobian matrix.

The Newton-Raphson method proceeds as follows:

\subparagraph{Step a.\quad}

Properly select the initial guess $\mathbf{x}^{(0)}$ to guarantee
convergence.

\subparagraph{Step b.\quad}

Evaluate the Jacobian $\mathbf{J}(\mathbf{x}^{(k)})$ at the current
iteration point $\mathbf{x}^{(k)}$.

\subparagraph{Step c.\quad}

Compute the next approximation $\mathbf{x}^{(k+1)}$ using the iteration
formula.

\subparagraph{Step d.}

Check the convergence if $\Vert \mathbf{H}(\mathbf{x}^{(k+1)})\Vert
<\epsilon $, where $\epsilon $ is the convergence threshold.

\subparagraph{Step e.\quad}

Repeat until convergence is achieved.

\subparagraph{Step f: Compute Derived Quantities}

Once $K_1$ and $K_2$ are determined, use them to compute additional
quantities:

\begin{equation*}
S_1 = \exp(K_1 R^2), \quad S_2 = \exp(K_2 R^2),
\end{equation*}

where $R$ is the domain radius.

\subparagraph{Remark.}

\texttt{fsolve} returns: solution vector $\mathbf{x}$ that satisfies $%
\mathbf{H}(\mathbf{x})\approx \mathbf{0}$, information about the convergence
status, such as success or failure of the algorithm, the solution is given
as:

\begin{equation*}
K_{1},K_{2}=\text{fsolve}(\text{equations},\text{initial guess}).
\end{equation*}

\subparagraph{Remark.}

The initial guess $\mathbf{x}^{(0)}$ significantly affects the convergence
and success of the algorithm. The Jacobian matrix $\mathbf{J}(\mathbf{x})$
is non-singular at each iteration, so the method succeed. If Newton-Raphson
fails, \texttt{fsolve} may fall back on other methods, such as quasi-Newton
or trust-region algorithms.

\subsubsection{Step 2: Defining Functions $g_{1}$ and $g_{2}$}

The functions $g_{1}(x,t,s)$ and $g_{2}(x,t,s)$ are defined as: 
\begin{align*}
g_{1}(x,t,s)& =\frac{f_{1}\left( x\right) }{\sigma _{1}^{4}}t+\frac{%
2(a_{1}+\alpha _{1})}{\sigma _{1}^{2}}t\ln (t)-\frac{2a_{1}\sigma _{2}^{2}}{%
\sigma _{1}^{4}}t\ln (s), \\
g_{2}(x,t,s)& =\frac{f_{2}\left( x\right) }{\sigma _{2}^{4}}s+\frac{%
2(a_{2}+\alpha _{2})}{\sigma _{2}^{2}}s\ln (s)-\frac{2a_{2}\sigma _{1}^{2}}{%
\sigma _{2}^{4}}s\ln (t).
\end{align*}

\subsubsection{Step 3: Calculating Partial Derivatives of $g_{1}$ and $g_{2}$%
}

The partial derivatives of the functions $g_{1}$ and $g_{2}$ are computed
as: 
\begin{align*}
\frac{\partial g_{1}}{\partial t}& =\frac{f_{1}\left( x\right) }{\sigma
_{1}^{4}}+\frac{2(a_{1}+\alpha _{1})}{\sigma _{1}^{2}}(1+\ln (t))-\frac{%
2a_{1}\sigma _{2}^{2}}{\sigma _{1}^{4}}\ln (s), \\
\frac{\partial g_{2}}{\partial s}& =\frac{f_{2}\left( x\right) }{\sigma
_{2}^{4}}+\frac{2(a_{2}+\alpha _{2})}{\sigma _{2}^{2}}(1+\ln (s))-\frac{%
2a_{2}\sigma _{1}^{2}}{\sigma _{2}^{4}}\ln (t).
\end{align*}

These derivatives are used to compute the maximum values $\Lambda_1$ and $%
\Lambda_2$:

\begin{equation*}
\Lambda_1 = -\max\left(\frac{\partial g_1}{\partial t}\right), \quad
\Lambda_2 = -\max\left(\frac{\partial g_2}{\partial s}\right).
\end{equation*}

\subsubsection{Step 4: Procedure to Compute $\Lambda _{1}$ and $\Lambda _{2}$%
}

The computation of $\Lambda_1$ and $\Lambda_2$ involves evaluating the
partial derivatives of $g_1$ and $g_2$ and determining their maxima over
specified domains.

\subparagraph{Step a: Discretize the Domain}

Define a grid of points $x$ over the domain $[-R,R]$, where $R$ is the
radius of the domain and for each $x$, compute the ranges for $t$ and $s$:%
\begin{equation*}
t\in \left[ \exp \left( K_{1}\left( R^{2}-\left\vert x\right\vert
^{2}\right) \right) ,1\right] ,\quad s\in \left[ \exp \left( K_{2}\left(
R^{2}-\left\vert x\right\vert ^{2}\right) \right) ,1\right] .
\end{equation*}

\subparagraph{Step b: Iterate Over the Grid}

The logarithmic terms in $g_{1}$ and $g_{2}$ require careful handling.
Newton's method or fixed-point iterations are employed to linearize the
equations at each step.

For each combination of $x$, $t$, and $s$ in their respective ranges,
compute the partial derivatives:%
\begin{equation*}
\frac{\partial g_{1}}{\partial t}\quad \text{and}\quad \frac{\partial g_{2}}{%
\partial s}.
\end{equation*}

Update the maximum values iteratively:

\begin{equation*}
\Lambda _{1}=-\max \left( \frac{\partial g_{1}}{\partial t}\right) ,\quad
\Lambda _{2}=-\max \left( \frac{\partial g_{2}}{\partial s}\right) .
\end{equation*}

\subparagraph{Step c: Negate the Results}

Finally, the values $\Lambda_1$ and $\Lambda_2$ are negated to convert the
maximum gradient into the required form:

\begin{equation*}
\Lambda _{1}=-\max \left( \frac{\partial g_{1}}{\partial t}\right) ,\quad
\Lambda _{2}=-\max \left( \frac{\partial g_{2}}{\partial s}\right) .
\end{equation*}%
The computation of $\Lambda _{1}$ and $\Lambda _{2}$ is typically
implemented using nested loops: outer loop (iterate over all values of $x$),
middle loop (iterate over all values of $t$ for a given $x$) and inner loop
(iterate over all values of $s$ for a given $x$ and $t$).

Choosing appropriate values of $\Lambda _{1}$ and $\Lambda _{2}$ ensures
that the scheme remains stable and convergent.

\subsubsection{Step 5: Monotone Iteration Scheme, with Initial Iteration and
Convergence Criteria}

The iteration scheme is implemented using a finite difference method: the
domain $B_{R}$ is discretized into a uniform grid with spacing $h>0$,
second-order central differences are used to approximate the Laplacian
operator $\Delta u_{j}$ and boundary conditions are enforced explicitly at
the grid points on $\partial B_{R}$.

In one dimension, the second derivative $\frac{\partial ^{2}u}{\partial x^{2}%
}$ at a point can be approximated using the second-order central difference
method:

\begin{equation*}
\frac{\partial ^{2}u}{\partial x^{2}}\approx \frac{u(x+h)-2u(x)+u(x-h)}{h^{2}%
}.
\end{equation*}

In $N$ dimensions, the Laplacian at a grid point $(i_{1},i_{2},\dots ,i_{n})$
is approximated as:

\begin{equation*}
\Delta u_{i_{1},i_{2},\dots ,i_{n}}\approx \sum_{d=1}^{N}\frac{%
u_{i_{1},\dots ,i_{d+1},\dots ,i_{n}}-2u_{i_{1},\dots ,i_{d},\dots
,i_{n}}+u_{i_{1},\dots ,i_{d-1},\dots ,i_{n}}}{h^{2}},
\end{equation*}%
where $d$ is the dimension index, and $u_{i_{1},\dots }$ refers to
neighboring values in the $d$-th direction. Since, this formula can be
extends naturally to higher dimensions by applying it independently along
each spatial direction we consider in the next the case $N=1$ and $h=\Delta
x $.

\subparagraph{Step a: Successive Approximation Method}

The successive approximation method involves solving for $u_{1}(x)$ and $%
u_{2}(x)$ iteratively: 
\begin{align*}
u_{1}^{(k+1)}[i]& =\frac{g_{1}(x[i],u_{1}^{(k)}[i],u_{2}^{(k)}[i])+\Lambda
_{1}u_{1}^{(k)}[i]-\frac{u_{1}[i-1]+u_{1}[i+1]}{\Delta x^{2}}}{-\frac{2}{%
\Delta x^{2}}+\Lambda _{1}}, \\
u_{2}^{(k+1)}[i]& =\frac{g_{2}(x[i],u_{1}^{(k)}[i],u_{2}^{(k)}[i])+\Lambda
_{2}u_{2}^{(k)}[i]-\frac{u_{2}[i-1]+u_{2}[i+1]}{\Delta x^{2}}}{-\frac{2}{%
\Delta x^{2}}+\Lambda _{2}}.
\end{align*}%
To initialize the successive approximation process, we start with
sub-solutions $u_{1}^{(0)}(x)$ and $u_{2}^{(0)}(x)$, typically defined as:

\begin{equation*}
u_{1}^{(0)}(x)=\exp \left( K_{1}\left( R^{2}-\left\vert x\right\vert
^{2}\right) \right) \text{ and }u_{2}^{(0)}(x)=\exp \left( K_{1}\left(
R^{2}-\left\vert x\right\vert ^{2}\right) \right) \text{ for all }\left\vert
x\right\vert =R.
\end{equation*}%
For the first iteration ($k=1$), we use the following update rules:%
\begin{equation*}
\left\{ 
\begin{array}{c}
u_{1}^{(1)}[i]=\frac{g_{1}(x[i],u_{1}^{(0)}[i],u_{2}^{(0)}[i])+\Lambda
_{1}u_{1}^{(0)}[i]-\frac{u_{1}[i-1]+u_{1}[i+1]}{\Delta x^{2}}}{-\frac{2}{%
\Delta x^{2}}+\Lambda _{1}}, \\ 
\\ 
u_{2}^{(1)}[i]=\frac{g_{2}(x[i],u_{1}^{(0)}[i],u_{2}^{(0)}[i])+\Lambda
_{2}u_{2}^{(0)}[i]-\frac{u_{2}[i-1]+u_{2}[i+1]}{\Delta x^{2}}}{-\frac{2}{%
\Delta x^{2}}+\Lambda _{2}}.%
\end{array}%
\right.
\end{equation*}
The new values of $u_{1}^{(1)}(x)$ and $u_{2}^{(1)}(x)$ are then used to
compute the subsequent iterations.

\subparagraph{Step b: Checking Convergence Criteria}

The convergence criteria are based on the maximum absolute difference
between the current and previous iterations. Define the convergence
thresholds $\epsilon > 0$ as:

\begin{equation*}
\max |u_1^{(k)}(x) - u_1^{(k-1)}(x)| < \epsilon, \quad \max |u_2^{(k)}(x) -
u_2^{(k-1)}(x)| < \epsilon.
\end{equation*}

The algorithm continues iterating until both criteria are satisfied: 
\begin{equation*}
\text{if }\max |u_{1}^{(k)}(x)-u_{1}^{(k-1)}(x)|\geq \epsilon \text{ or }%
\max |u_{2}^{(k)}(x)-u_{2}^{(k-1)}(x)|\geq \epsilon ,\text{ then continue
iterating.}
\end{equation*}

When the convergence criteria are satisfied, the algorithm terminates, and
the solutions $u_1(x)$ and $u_2(x)$ are considered stable.

\subparagraph{Step c: Where Convergence is Checked}

The convergence criteria are checked after each full iteration over the
domain $x \in [-R, R]$. Specifically, at the end of each iteration $k$, the
algorithm computes:

\begin{equation*}
\Delta u_1^{(k)} = \max |u_1^{(k)}(x) - u_1^{(k-1)}(x)|, \quad \Delta
u_2^{(k)} = \max |u_2^{(k)}(x) - u_2^{(k-1)}(x)|.
\end{equation*}

If both $\Delta u_1^{(k)}$ and $\Delta u_2^{(k)}$ are below $\epsilon$, the
algorithm exits the loop.

\subsubsection{Step 6: Value Functions and Optimal Rates}

The value functions are computed as:

\begin{equation*}
z_{1}(x)=-2\sigma _{1}^{2}\ln (u_{1}\left( x\right) ),\quad
z_{2}(x)=-2\sigma _{2}^{2}\ln (u_{2}\left( x\right) ).
\end{equation*}

The optimal production rates are calculated as:

\begin{equation*}
p_1^*(x) = -\frac{1}{2} \frac{\partial z_1}{\partial x}, \quad p_2^*(x) = -%
\frac{1}{2} \frac{\partial z_2}{\partial x}.
\end{equation*}

\subsubsection{Step 7: Upper Bound for Value Functions}

The upper-bound functions are defined as:

\begin{equation*}
B_1(x) = -2\sigma_1^2 K_1 (R^2 - x^2), \quad B_2(x) = -2\sigma_2^2 K_2 (R^2
- x^2).
\end{equation*}

\subsubsection{Step 8: Output and Convergence Properties}

The iterative scheme guarantees monotonic convergence to the unique solution
of the transformed PDE system:

\begin{equation*}
u_{1}^{k}(x)\rightarrow u_{1}(x),\quad u_{2}^{k}(x)\rightarrow
u_{2}(x),\quad \text{as }k\rightarrow \infty ,
\end{equation*}%
with the original value functions recovered as:

\begin{equation*}
z_{j}(x)=-2\sigma _{j}^{2}\ln u_{j}(x),\text{ }j=1,2.
\end{equation*}

\subsubsection{ Step 9: Simulation of Inventory Dynamics}

The inventory process is modeled by the stochastic differential equation
(SDE)

\begin{equation*}
dy(t)=p^{\ast }(t,\varepsilon (t))\,dt+\sigma _{\varepsilon
(t)}\,dW(t),\quad y(0)=x_{0},
\end{equation*}%
where the economic regime $\varepsilon (t)$ takes values in $\{1,2\}$ with
transitions:

\begin{equation*}
\mathbb{P}(\varepsilon (t+dt)=2\mid \varepsilon (t)=1)=a_{1}\,dt,\quad 
\mathbb{P}(\varepsilon (t+dt)=1\mid \varepsilon (t)=2)=a_{2}\,dt.
\end{equation*}%
In the simulation, one uses a time step $\Delta t$ and performs the
following update:

\begin{equation*}
y(t+\Delta t)=y(t)+p^{\ast }(y(t),\varepsilon (t))\,\Delta t+\sigma
_{\varepsilon (t)}\,\sqrt{\Delta t}\,\xi ,
\end{equation*}%
where $\xi \sim \mathcal{N}(0,1)$ is an independent standard normal random
variable. The control $p^{\ast }(\cdot )$ is determined via interpolation
from the computed optimal rates:%
\begin{equation*}
p^{\ast }(y(t),\varepsilon (t))=\left\{ 
\begin{array}{ccc}
-\frac{1}{2}\frac{\partial z_{1}}{\partial y} & if & \varepsilon (t)=1, \\ 
-\frac{1}{2}\frac{\partial z_{2}}{\partial y} & if & \varepsilon (t)=2.%
\end{array}%
\right.
\end{equation*}%
The simulation by Euler-Maruyama is executed until the stopping time

\begin{equation*}
\tau =\inf \{t>0:|y(t)|\geq R\},
\end{equation*}%
at which point the production is halted.

Numerical results, including the approximated value functions $z_{1}(x)$ and 
$z_{2}(x)$, are visualized to provide insights into the effects of regime
switching and sensitivity to model parameters. This facilitates
interpretation and validation of the results. Finally, the results,
including the dynamic of inventory $z_{1}(x)$, $z_{2}(x)$, $B_{1}(x)$, $%
B_{2}(x)$, $p_{1}^{\ast }(x)$ and $p_{2}^{\ast }(x)$ are visualized using
plots.

\section{Sensitivity, Model Analysis and Visualization\label{5}}

The proof of the results in this section is detailed in reference \cite{CCP}%
, and thus the specifics are excluded here. The data is presented to
visualize it in accordance with the results, showcasing the strength of the
mathematical conclusions and numerical implementation.

\subsection{Sensitivity Analysis}

The sensitivity analysis shows the following impacts:

\begin{theorem}
\label{s1}If $\alpha _{1}=\alpha _{2}$ and $f_{1}\left( x\right)
=f_{2}\left( x\right) $ for all $x\in B_{R}$, then higher volatility ($%
\sigma _{1}>\sigma _{2}$) increases the value function: $z_{1}(x)\geq
z_{2}(x)$ for all $x\in \overline{B}_{R}$.
\end{theorem}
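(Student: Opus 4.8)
The plan is to prove the inequality by a maximum-principle argument applied to the difference of the two value functions, exploiting that under $\alpha_1=\alpha_2=:\alpha$ and $f_1=f_2=:f$ the two equations of (\ref{sp}) differ only through the diffusion coefficients $\sigma_1^2,\sigma_2^2$.

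First I would rewrite (\ref{sp}) as $(a_i+\alpha)z_i-a_iz_j-\tfrac{\sigma_i^2}{2}\Delta z_i+\tfrac14|\nabla z_i|^2=f$ for $\{i,j\}=\{1,2\}$, set $w:=z_1-z_2$ (so $w=0$ on $\partial B_R$, and the goal is $w\ge 0$ in $B_R$), and subtract the two equations. Using $\tfrac{\sigma_1^2}{2}\Delta z_1-\tfrac{\sigma_2^2}{2}\Delta z_2=\tfrac{\sigma_1^2}{2}\Delta w+\tfrac{\sigma_1^2-\sigma_2^2}{2}\Delta z_2$ and $|\nabla z_1|^2-|\nabla z_2|^2=(\nabla z_1+\nabla z_2)\cdot\nabla w$, the difference collapses to the single scalar equation
\[
\frac{\sigma_1^2}{2}\,\Delta w-\frac14(\nabla z_1+\nabla z_2)\cdot\nabla w-(a_1+a_2+\alpha)\,w=-\frac{\sigma_1^2-\sigma_2^2}{2}\,\Delta z_2\qquad\text{in }B_R .
\]
The operator on the left is uniformly elliptic, with locally bounded lower-order coefficients (by Theorem \ref{main}, $z_1,z_2\in C^2(B_R)\cap C(\overline B_R)$ and $0\le z_i\le B_i$) and strictly positive zeroth-order coefficient $c:=a_1+a_2+\alpha>0$. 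For such operators the weak maximum principle gives the implication: if the right-hand side is $\le 0$ in $B_R$ and $w\ge 0$ on $\partial B_R$, then $w\ge 0$ in $B_R$. Since $\sigma_1>\sigma_2$ gives $\sigma_1^2-\sigma_2^2>0$, everything reduces to controlling the sign of $\Delta z_2$ (symmetrically, one could instead split off $\tfrac{\sigma_1^2-\sigma_2^2}{2}\Delta z_1$ and control $\Delta z_1$).

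The main obstacle is that $\Delta z_2$ has no global sign: $z_2$ is positive in $B_R$ and vanishes on $\partial B_R$, hence cannot be subharmonic on all of $B_R$, so the right-hand side is not everywhere $\le 0$. I would therefore argue not globally but at a candidate interior negative minimum $x_0\in B_R$ of $w$ (a minimum on $\partial B_R$ is $\ge 0$, so this is the only case to exclude). At such a point $\nabla z_1(x_0)=\nabla z_2(x_0)$ and $\Delta z_1(x_0)\ge\Delta z_2(x_0)$; substituting the two equations of (\ref{sp}) at $x_0$ (which cancels the common gradient term and the common $f$) yields $(a_1+a_2+\alpha)(z_1-z_2)(x_0)=\tfrac{\sigma_1^2}{2}\Delta z_1(x_0)-\tfrac{\sigma_2^2}{2}\Delta z_2(x_0)$, whose left side is strictly negative. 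Turning this into a contradiction is the crux: one must combine it with $\Delta z_1(x_0)\ge\Delta z_2(x_0)$, with $\sigma_1>\sigma_2$, and with the a priori bounds $0\le z_i(x)\le B_i(x)=-2\sigma_i^2K_i(R^2-|x|^2)$ together with the sign inequalities (\ref{non}) for $K_1,K_2$ from Lemma \ref{l1}, to see that the second-order test at $x_0$ and this identity cannot coexist. I expect the bookkeeping of these inequalities — in particular isolating the right weighted quantity to test so that the mismatch $\sigma_1^2\ne\sigma_2^2$ is used in the favorable direction — to be the delicate step.

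If the direct estimate proves awkward, two fallbacks are available. One can transfer the comparison to the transformed system (\ref{sl}): the target inequality is equivalent to $\sigma_1^2\ln u_1\le\sigma_2^2\ln u_2$, and the monotone Picard scheme of Step~2 in the proof of Theorem \ref{main} can be shown to preserve this ordering once it holds for the ordered sub-solutions, passing to the limit. Alternatively, a probabilistic route uses the control representation (\ref{hjb}): since the running cost and the discount rate are common to both regimes, one couples the optimally controlled Markov-modulated inventories issued from regimes $1$ and $2$ on a single Brownian path and uses convexity of $f$ against the larger spread induced by $\sigma_1>\sigma_2$; the subtlety there is the random exit time $\tau$, which must be controlled so that the larger diffusion does not merely trigger an earlier stop. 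In all three approaches the decisive point is identical: extracting a usable sign from $\sigma_1^2\ne\sigma_2^2$ in the second-order terms.
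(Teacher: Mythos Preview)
The paper does not actually prove this theorem in the text: Section~\ref{5} explicitly defers all proofs of Theorems~\ref{s1}--\ref{s4} to reference~\cite{CCP}, so there is no in-paper argument to compare your proposal against.

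That said, your proposal has a genuine gap at exactly the point you flag as ``the delicate step.'' At a putative interior negative minimum $x_0$ of $w=z_1-z_2$ you correctly obtain $(a_1+a_2+\alpha)\,w(x_0)=\tfrac{\sigma_1^2}{2}\Delta z_1(x_0)-\tfrac{\sigma_2^2}{2}\Delta z_2(x_0)$ with left side strictly negative, together with $\Delta z_1(x_0)\ge\Delta z_2(x_0)$. But these two facts alone do \emph{not} force a contradiction: if both Laplacians are sufficiently negative at $x_0$ (which is perfectly consistent with $z_i$ attaining an interior positive maximum nearby), the right-hand side can be negative even though $\sigma_1>\sigma_2$. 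Indeed, splitting either way shows only that $\Delta z_1(x_0)<0$ and $\Delta z_2(x_0)<0$, and the inequality $\Delta z_1(x_0)\ge\Delta z_2(x_0)$ then gives no useful sign. Your appeal to the a~priori bounds $0\le z_i\le B_i$ and to the algebraic relations~(\ref{non}) for $K_1,K_2$ is unsubstantiated: those bounds control the \emph{size} of $z_i$, not the pointwise sign of $\Delta z_i$ at an unspecified interior point. Neither of your fallbacks is carried far enough to close this gap either---for the iteration route you would need to verify that the ordering $\sigma_1^2\ln u_1^k\le\sigma_2^2\ln u_2^k$ is preserved by the scheme in Step~2 of Theorem~\ref{main} (which is not obvious, since $\sigma_1\ne\sigma_2$ enters the nonlinearities $g_1,g_2$ asymmetrically), and for the probabilistic route you yourself note that the random exit time $\tau$ obstructs a naive coupling. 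As written, the proposal is an outline with the decisive estimate missing.
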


\begin{theorem}
\label{s2}If $\sigma _{1}=\sigma _{2}$ and $f_{1}\left( x\right)
=f_{2}\left( x\right) $ for all $x\in B_{R}$, then higher discount rates ($%
\alpha _{1}<\alpha _{2}$) decrease the value function: $z_{1}(x)\geq
z_{2}(x) $ for all $x\in \overline{B}_{R}$.
\end{theorem}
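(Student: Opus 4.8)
The plan is to reduce the claim to a scalar linear elliptic inequality for $w := z_1 - z_2$ and conclude with the weak maximum principle. First I would subtract the second equation of~(\ref{sp}) from the first. Since $\sigma_1 = \sigma_2 =: \sigma$ and $f_1 = f_2 =: f$, the inhomogeneous terms cancel and the two Laplacians combine into $-\tfrac{\sigma^2}{2}\Delta w$, while the linear zeroth-order terms regroup as $(a_1+a_2+\alpha_1)z_1 - (a_1+a_2+\alpha_2)z_2$. The essential step is to linearize the difference of the quadratic gradient terms through the identity
\begin{equation*}
|\nabla z_1|^2 - |\nabla z_2|^2 = (\nabla z_1 + \nabla z_2)\cdot\nabla w ,
\end{equation*}
which exhibits it as a genuine first-order term in $w$ with coefficient $\tfrac14(\nabla z_1+\nabla z_2)$, continuous on $B_R$ because $z_1,z_2\in C^2(B_R)$ by Theorem~\ref{main}.

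Next, writing $(a_1+a_2+\alpha_1)z_1 - (a_1+a_2+\alpha_2)z_2 = (a_1+a_2+\alpha_1)\,w + (\alpha_1-\alpha_2)\,z_2$ and rearranging, I obtain
\begin{equation*}
-\frac{\sigma^2}{2}\Delta w + \frac14(\nabla z_1 + \nabla z_2)\cdot\nabla w + (a_1+a_2+\alpha_1)\,w = (\alpha_2 - \alpha_1)\,z_2 \quad\text{in } B_R,
\end{equation*}
together with $w = 0$ on $\partial B_R$. Denote the left-hand operator by $L$; it is uniformly elliptic, has locally bounded drift, and has \emph{strictly positive} zeroth-order coefficient $c = a_1+a_2+\alpha_1 > 0$. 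The right-hand side is nonnegative: $\alpha_2 > \alpha_1$ by hypothesis, and $z_2 \ge 0$ since $z_2 = -2\sigma^2\ln u_2$ with $0 < u_2 \le 1$ from the construction in the proof of Theorem~\ref{main}.

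It then remains to apply the weak maximum principle to $Lw \ge 0$. If $w$ were negative somewhere, then, being continuous on the compact set $\overline{B}_R$ and vanishing on $\partial B_R$, it would attain a negative minimum at an interior point $x_0 \in B_R$; there $\nabla w(x_0)=0$ and $\Delta w(x_0)\ge 0$, whence $Lw(x_0)\le c\,w(x_0) < 0$, contradicting $Lw = (\alpha_2-\alpha_1)z_2 \ge 0$. Hence $w\ge 0$ on $\overline{B}_R$, i.e.\ $z_1 \ge z_2$, which is the assertion.

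I expect the only real obstacle to be careful bookkeeping: getting the signs right when regrouping the linear terms, and in particular noticing that $|\nabla z_1|^2-|\nabla z_2|^2$ must be rewritten as the product above so that $L$ becomes a bona fide linear elliptic operator (otherwise the maximum principle is not directly available). It is worth stating explicitly that no regularity of $\nabla z_i$ up to $\partial B_R$ is needed, because the argument only evaluates the drift at an interior extremum. A purely probabilistic proof — heavier discounting lowers the expected discounted running cost — is appealing but not immediate, since the chain leaves regime $2$ and so it would require a coupling argument; the PDE comparison above is self-contained and uses only Theorem~\ref{main}. The companion statement Theorem~\ref{s1} can be handled by the same device, replacing the forcing $(\alpha_2-\alpha_1)z_2$ by the term coming from the mismatched diffusion coefficients and checking its sign.
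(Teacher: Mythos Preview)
Your argument is correct. Subtracting the two equations of~(\ref{sp}) under $\sigma_1=\sigma_2$, $f_1=f_2$ yields exactly the linear elliptic equation you wrote for $w=z_1-z_2$, the factorization $|\nabla z_1|^2-|\nabla z_2|^2=(\nabla z_1+\nabla z_2)\cdot\nabla w$ is legitimate, the nonnegativity of the forcing $(\alpha_2-\alpha_1)z_2$ follows from Theorem~\ref{main} (which gives $z_2\ge 0$), and the interior-minimum contradiction is the standard weak maximum principle with strictly positive zeroth-order coefficient.

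A direct comparison with the paper's proof is not possible: the paper does not prove Theorem~\ref{s2} here but defers to reference~\cite{CCP}. Your self-contained PDE comparison is a natural route and is likely close in spirit to what is done there; in any case it stands on its own using only Theorem~\ref{main}.
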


\begin{theorem}
\label{s3}If $\alpha _{1}=\alpha _{2}$ and $\sigma _{1}=\sigma _{2}$, then
higher holding costs ($f_{1}(x)>f_{2}(x)$) increase the value function: $%
z_{1}(x)\geq z_{2}(x)$ for all $x\in \overline{B}_{R}$.
\end{theorem}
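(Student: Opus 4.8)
The plan is to reduce the statement to a scalar elliptic comparison: set $w:=z_{1}-z_{2}$ and show $w\geq 0$ on $\overline{B}_{R}$. By Theorem~\ref{main} the pair $(z_{1},z_{2})\in[C^{2}(B_{R})\cap C(\overline{B}_{R})]^{2}$ exists, solves (\ref{sp}), and satisfies $z_{1}=z_{2}=0$ on $\partial B_{R}$; hence $w\in C^{2}(B_{R})\cap C(\overline{B}_{R})$ with $w=0$ on $\partial B_{R}$, and every manipulation below is a legitimate pointwise identity in $B_{R}$.

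First I would specialize (\ref{sp}) to the present hypotheses $\alpha_{1}=\alpha_{2}=:\alpha$ and $\sigma_{1}=\sigma_{2}=:\sigma$, rewriting the two HJB equations as
\begin{align*}
-\tfrac{\sigma^{2}}{2}\Delta z_{1}+(a_{1}+\alpha)z_{1}-a_{1}z_{2}+\tfrac14|\nabla z_{1}|^{2}&=f_{1}(x),\\
-\tfrac{\sigma^{2}}{2}\Delta z_{2}+(a_{2}+\alpha)z_{2}-a_{2}z_{1}+\tfrac14|\nabla z_{2}|^{2}&=f_{2}(x),
\end{align*}
and subtract them. The key algebraic point is that the coupling terms collapse: the coefficient of $z_{1}$ becomes $(a_{1}+\alpha)+a_{2}$ and that of $z_{2}$ becomes $-a_{1}-(a_{2}+\alpha)$, so the zeroth–order part is exactly $(a_{1}+a_{2}+\alpha)\,w$ with a strictly positive coefficient — and this happens whether or not $a_{1}=a_{2}$. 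For the nonlinear part I would factor $|\nabla z_{1}|^{2}-|\nabla z_{2}|^{2}=(\nabla z_{1}+\nabla z_{2})\cdot\nabla w$, a genuine first–order term in $w$. This produces the linear elliptic (in)equality
\begin{equation*}
-\tfrac{\sigma^{2}}{2}\Delta w+\tfrac14(\nabla z_{1}+\nabla z_{2})\cdot\nabla w+(a_{1}+a_{2}+\alpha)\,w=f_{1}(x)-f_{2}(x)\ \geq\ 0\qquad\text{in }B_{R},
\end{equation*}
with $w=0$ on $\partial B_{R}$; the right–hand side is nonnegative precisely because $f_{1}>f_{2}$.

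Finally I would conclude with the weak maximum principle for an operator with nonnegative zeroth–order coefficient. Concretely, let $m:=\min_{\overline{B}_{R}}w$, attained since $w$ is continuous on a compact set; if $m<0$ it cannot be attained on $\partial B_{R}$ (where $w=0$), so it is attained at some $x^{\star}\in B_{R}$, where $\nabla w(x^{\star})=0$ and $\Delta w(x^{\star})\geq 0$. Evaluating the displayed identity at $x^{\star}$ gives $(a_{1}+a_{2}+\alpha)\,w(x^{\star})=f_{1}(x^{\star})-f_{2}(x^{\star})+\tfrac{\sigma^{2}}{2}\Delta w(x^{\star})\geq 0$, hence $m=w(x^{\star})\geq 0$, a contradiction. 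Therefore $w\geq 0$ on $\overline{B}_{R}$, i.e.\ $z_{1}(x)\geq z_{2}(x)$ for all $x\in\overline{B}_{R}$; the strong maximum principle even yields strict inequality in $B_{R}$ since $f_{1}-f_{2}>0$.

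There is essentially no hard step: once the subtraction is performed this is a textbook comparison argument, and the interior–minimum version used here requires no regularity of the drift coefficient $\tfrac14(\nabla z_{1}+\nabla z_{2})$ beyond its continuity on $B_{R}$ (supplied by $C^{2}$ regularity), because $\nabla w$ vanishes at the minimizing point. The one point worth emphasizing is the cancellation of the off–diagonal terms, which is exactly what collapses the $2\times2$ regime–switching system to a single scalar inequality and explains why the transition rates $a_{1},a_{2}$ do not enter the conclusion.
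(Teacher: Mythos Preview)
Your argument is correct. The subtraction of the two equations in (\ref{sp}) under $\alpha_{1}=\alpha_{2}$, $\sigma_{1}=\sigma_{2}$ is carried out accurately; the crucial cancellation $(a_{1}+\alpha)z_{1}-a_{1}z_{2}-[(a_{2}+\alpha)z_{2}-a_{2}z_{1}]=(a_{1}+a_{2}+\alpha)w$ is exactly the right observation, and the interior--minimum form of the maximum principle you invoke needs nothing more than the $C^{2}$ regularity furnished by Theorem~\ref{main}.

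As for comparison: the paper does not actually prove Theorem~\ref{s3}. Section~\ref{5} states explicitly that the proofs of Theorems~\ref{s1}--\ref{s4} are ``detailed in reference \cite{CCP}, and thus the specifics are excluded here''; only numerical illustrations are given. Your maximum--principle route via $w=z_{1}-z_{2}$ is the standard and natural way to establish such monotonicity results for cooperative elliptic systems, and it is almost certainly what \cite{CCP} does as well, though one cannot confirm this from the present paper alone. One small refinement: since the hypothesis is the strict inequality $f_{1}(x)>f_{2}(x)$ on $B_{R}$, your final remark about the strong maximum principle giving $z_{1}>z_{2}$ in the open ball is justified, but the theorem as stated only asks for $z_{1}\geq z_{2}$ on $\overline{B}_{R}$, which your weak--minimum argument already delivers.
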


\subsection{Model Comparisons}

For models with and without regime switching, we have:

\begin{theorem}
\label{s4}If $\sigma _{1}>\sigma _{2}$, $\alpha _{1}<\alpha _{2}$ and $%
f_{1}\left( x\right) >f_{2}\left( x\right) $ for all $x\in B_{R}$, then:%
\begin{equation*}
\bar{z}_{1}(x)\geq z_{1}(x)\geq z_{2}(x)\geq \underline{z}_{2}(x),\text{ for
all }x\in B_{R}\text{,}
\end{equation*}%
where $\bar{z}_{1}(x)$ and $\underline{z}_{2}(x)$ corresponds to the value
function of a model without regime switching.
\end{theorem}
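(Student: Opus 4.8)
The plan is to prove the four-term chain by isolating its one genuinely coupled link, $z_1\ge z_2$, and then deriving the two flanking bounds $\bar{z}_1\ge z_1$ and $z_2\ge\underline{z}_2$ from it by scalar comparison. First I would fix the meaning of the ``no regime switching'' value functions: $\bar{z}_1$ is the unique positive solution of the decoupled HJB equation obtained by freezing regime $1$ (i.e.\ setting $a_1=0$ in the first line of (\ref{sp})),
\begin{equation*}
\alpha_1\bar{z}_1-\frac{\sigma_1^2}{2}\Delta\bar{z}_1+\frac14|\nabla\bar{z}_1|^2-f_1(x)=0\ \text{ in }B_R,\qquad \bar{z}_1=0\ \text{ on }\partial B_R,
\end{equation*}
and $\underline{z}_2$ is the analogous object with $(\alpha_2,\sigma_2,f_2)$; existence and uniqueness of each come from the monotone sub/super-solution construction of Theorem \ref{main} with the coupling switched off.

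Granting $z_2\le z_1$ for the moment, the first line of (\ref{sp}) rewrites as $\alpha_1 z_1-\frac{\sigma_1^2}{2}\Delta z_1+\frac14|\nabla z_1|^2-f_1=a_1(z_2-z_1)\le 0$, so $z_1$ is a subsolution of the equation defining $\bar{z}_1$ with the same zero boundary data. The comparison principle for that scalar quasilinear elliptic operator applies because the zeroth-order coefficient $\alpha_1$ is strictly positive and the Hamiltonian $p\mapsto\frac14|p|^2$ is locally Lipschitz: at any would-be positive interior maximum $x_0$ of $z_1-\bar{z}_1$ one has $\nabla z_1(x_0)=\nabla\bar{z}_1(x_0)$ and $\Delta z_1(x_0)\le\Delta\bar{z}_1(x_0)$, so subtracting the two relations forces $(z_1-\bar{z}_1)(x_0)\le 0$, whence $z_1\le\bar{z}_1$ on $\overline{B}_R$. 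Symmetrically, using $z_1\ge z_2$ the second line of (\ref{sp}) reads $\alpha_2 z_2-\frac{\sigma_2^2}{2}\Delta z_2+\frac14|\nabla z_2|^2-f_2=a_2(z_1-z_2)\ge 0$, so $z_2$ is a supersolution of the equation defining $\underline{z}_2$, and the same principle (now at an interior negative minimum of $z_2-\underline{z}_2$) gives $z_2\ge\underline{z}_2$. Chaining these with the middle inequality yields the theorem.

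It remains to prove $z_1(x)\ge z_2(x)$ on $\overline{B}_R$ under the \emph{joint} hypotheses $\sigma_1>\sigma_2$, $\alpha_1<\alpha_2$, $f_1>f_2$. Conceptually this is the superposition of the three one-parameter monotonicities of Theorems \ref{s1}--\ref{s3}: higher volatility, lower discounting, and higher holding cost each make regime $1$ the more expensive regime, and here all three effects point the same way. To make this rigorous I would run a maximum-principle/comparison argument directly on the coupled system (\ref{sp}) — or, more transparently, on the gradient-free transformed system (\ref{sl}), where the logarithmic coupling has a definite sign — by inspecting an interior maximum $x_0$ of $w:=z_2-z_1$, substituting the two PDEs at $x_0$ (where $\nabla z_1(x_0)=\nabla z_2(x_0)$ and $\Delta z_2(x_0)\le\Delta z_1(x_0)$), and contradicting $z_2(x_0)>z_1(x_0)$. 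An alternative route is probabilistic: couple the regime chains $\epsilon^{(1)}$ (started at $1$) and $\epsilon^{(2)}$ (started at $2$) so that $\epsilon^{(1)}$ always occupies the worse state, and transport the admissible controls accordingly.

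The hard part is exactly this last step, and the obstacle is the mismatch $\sigma_1\ne\sigma_2$. When the two Laplacians are eliminated at the candidate maximum $x_0$ of $z_2-z_1$, the surviving zeroth-order expression contains the term $\sigma_1^2 f_2(x_0)-\sigma_2^2 f_1(x_0)$, whose sign is \emph{not} controlled by $f_1>f_2$ and $\sigma_1>\sigma_2$ alone (it would require the strictly stronger $\sigma_1^2 f_2<\sigma_2^2 f_1$), so a single pointwise use of the maximum principle does not close the argument. This is precisely why the middle inequality has to be obtained, following \cite{CCP}, through a more careful layered comparison — or, on the probabilistic side, through a coupling of the state processes $y$ that also accounts for the different noise intensities and exploits the convexity of $f_1,f_2$ via Jensen's inequality — rather than in one stroke. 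Once $z_1\ge z_2$ is secured, the two outer bounds follow as above and the full chain $\bar{z}_1\ge z_1\ge z_2\ge\underline{z}_2$ is complete.
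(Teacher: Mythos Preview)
The paper itself supplies no proof of this theorem: the entire Section~\ref{5} opens with the sentence ``The proof of the results in this section is detailed in reference \cite{CCP}, and thus the specifics are excluded here.'' So there is nothing in the paper to compare your argument against beyond that citation.

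That said, your proposal goes strictly further than the paper. Your reduction of the two flanking inequalities to the middle one is correct and clean: granted $z_2\le z_1$, the first line of (\ref{sp}) indeed gives $\alpha_1 z_1-\tfrac{\sigma_1^2}{2}\Delta z_1+\tfrac14|\nabla z_1|^2-f_1=a_1(z_2-z_1)\le 0$, and your interior-maximum argument for the scalar comparison with $\bar z_1$ is valid (gradients match, Laplacian inequality has the right sign, the quadratic Hamiltonian drops out, and $\alpha_1>0$ forces the conclusion); the mirror argument for $z_2\ge\underline z_2$ is equally sound.

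Where you and the paper coincide is in not actually proving the middle link $z_1\ge z_2$. You correctly diagnose the obstruction: eliminating the Laplacians at a putative interior maximum of $z_2-z_1$ leaves the residual $\sigma_1^2 f_2-\sigma_2^2 f_1$, whose sign is not fixed by the hypotheses, so a one-shot maximum principle fails. Your two suggested remedies (a layered comparison on the transformed system (\ref{sl}), or a probabilistic coupling respecting the different diffusion coefficients) are plausible directions but are not carried out, and you explicitly fall back on \cite{CCP} for this step --- exactly as the paper does. So the gap in your proposal is the same gap the paper leaves, and you have at least named it precisely.
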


\subsection{Visualization of the solution \label{vs}}

The data provided 
\begin{equation*}
a_{1}=0.6\text{, }a_{2}=0.5\text{, }\alpha _{1}=\alpha _{2}=0.3\text{, }%
M_{1}=M_{2}=1\text{, }\sigma _{1}=1\text{, }\sigma _{2}=0.7
\end{equation*}%
and%
\begin{equation*}
f_{1}\left( x\right) =f_{2}\left( x\right) =x^{2},\text{ }x\in B_{20},
\end{equation*}%
adheres to Theorem \ref{s1}, ensuring that the plot shown in 
\begin{equation*}
\end{equation*}%

\begin{figure}[!ht]
\centering
\subfloat{\includegraphics[width=0.45\textwidth]{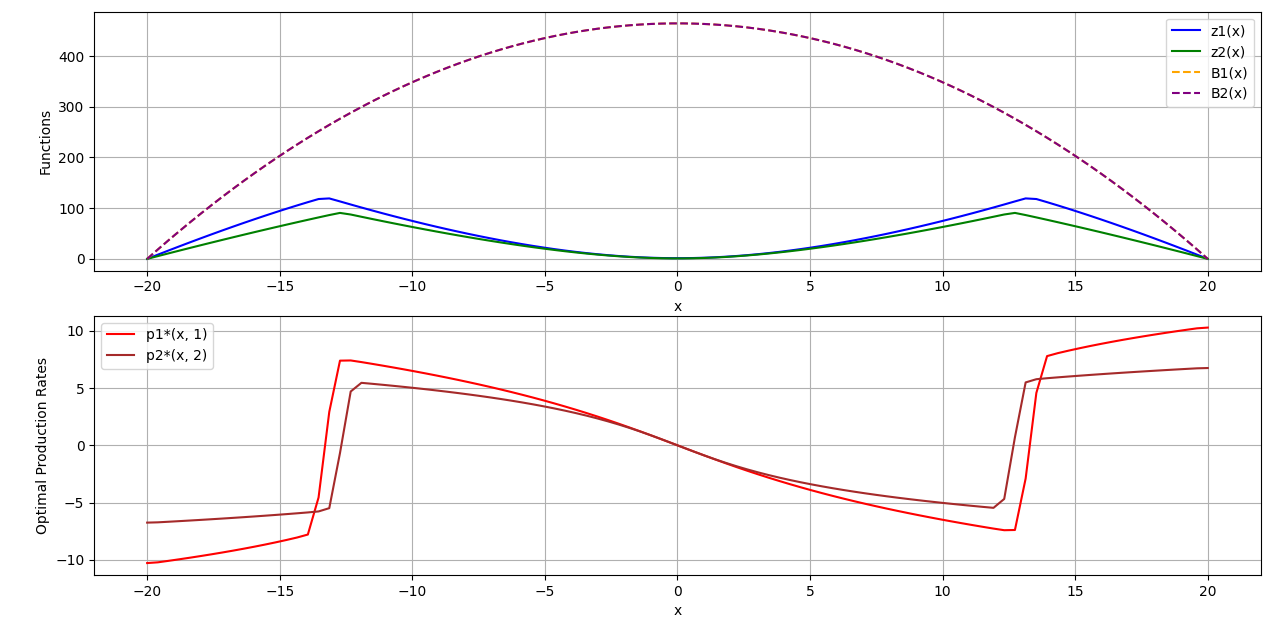}} %
\subfloat{\includegraphics[width=0.45\textwidth]{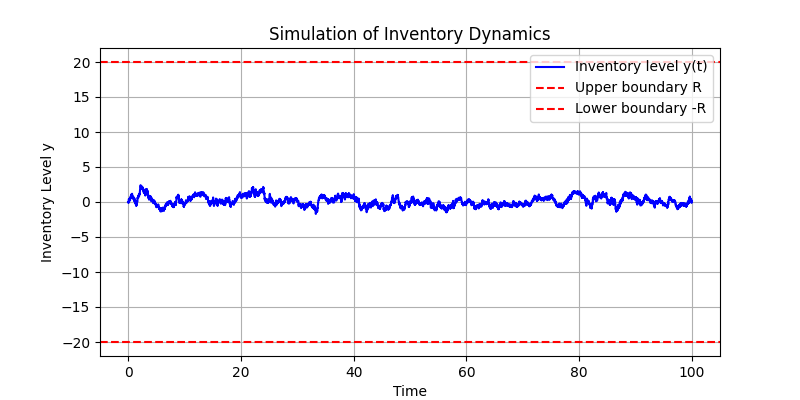}}
\caption{Caption describing the two figures.}
\label{fig:twoFigures1}
\end{figure}

\begin{equation*}
\end{equation*}%
aligns precisely with the derived results.

To comply with Theorem \ref{s2}, we utilize the specified data 
\begin{equation*}
a_{1}=0.6\text{, }a_{2}=0.5\text{, }\alpha _{1}=0.3\text{, }\alpha _{2}=0.7%
\text{, }M_{1}=M_{2}=1\text{, }\sigma _{1}=\sigma _{2}=1\text{, }
\end{equation*}%
and%
\begin{equation*}
f_{1}\left( x\right) =f_{2}\left( x\right) =x^{2},\text{ }x\in B_{20},
\end{equation*}%
leading to plots in 
\begin{equation*}
\end{equation*}

\begin{figure}[!ht]
\centering
\subfloat{\includegraphics[width=0.45\textwidth]{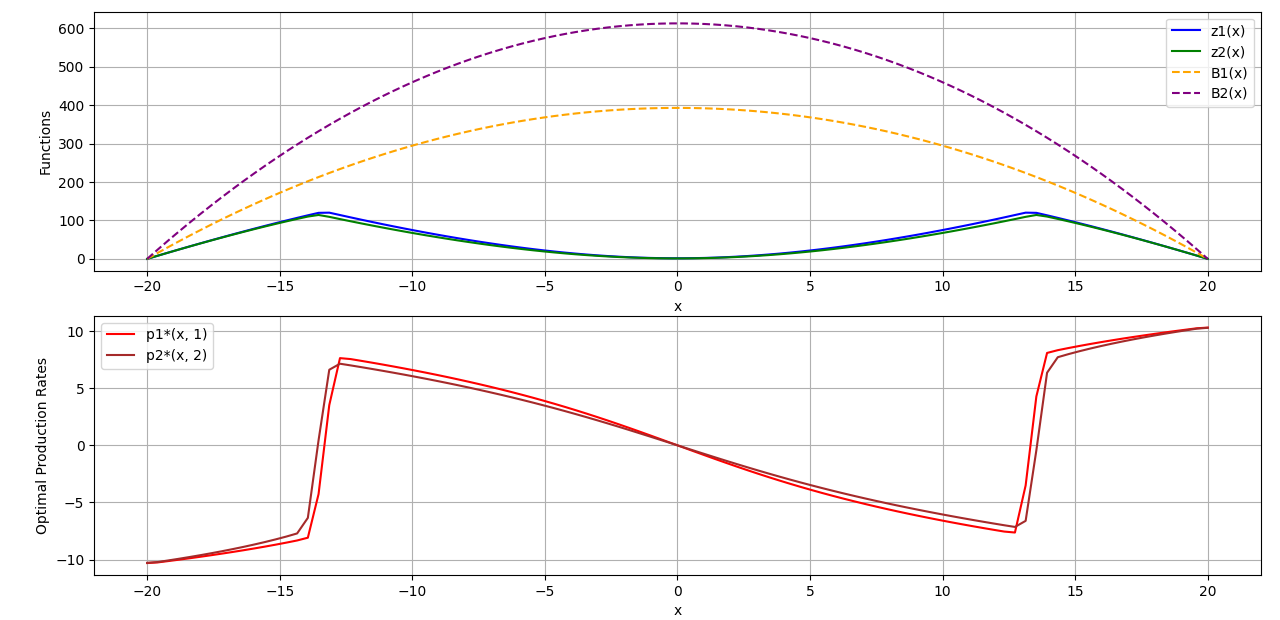}} %
\subfloat{\includegraphics[width=0.45\textwidth]{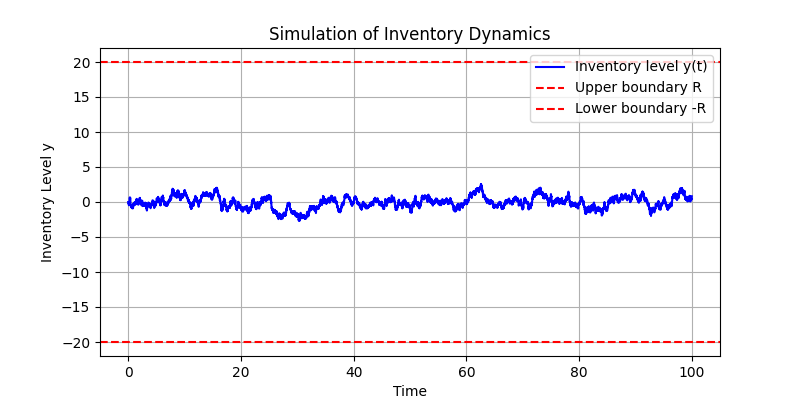}}
\caption{Caption describing the two figures.}
\label{fig:twoFigures2}
\end{figure}

\begin{equation*}
\end{equation*}
that are in exact agreement with the obtained findings.

By setting the data 
\begin{equation*}
a_{1}=0.6\text{, }a_{2}=0.9\text{, }\alpha _{1}=\alpha _{2}=0.3\text{, }%
M_{1}=5\text{, }M_{2}=1\text{, }\sigma _{1}=\sigma _{2}=1\text{,}
\end{equation*}%
and%
\begin{equation*}
f_{1}\left( x\right) =5x^{2}\text{, }f_{2}\left( x\right) =x^{2},\text{ }%
x\in B_{20},
\end{equation*}%
in accordance with Theorem \ref{s3}, the resulting plot in%
\begin{equation*}
\end{equation*}

\begin{figure}[!ht]
\centering
\subfloat{\includegraphics[width=0.45\textwidth]{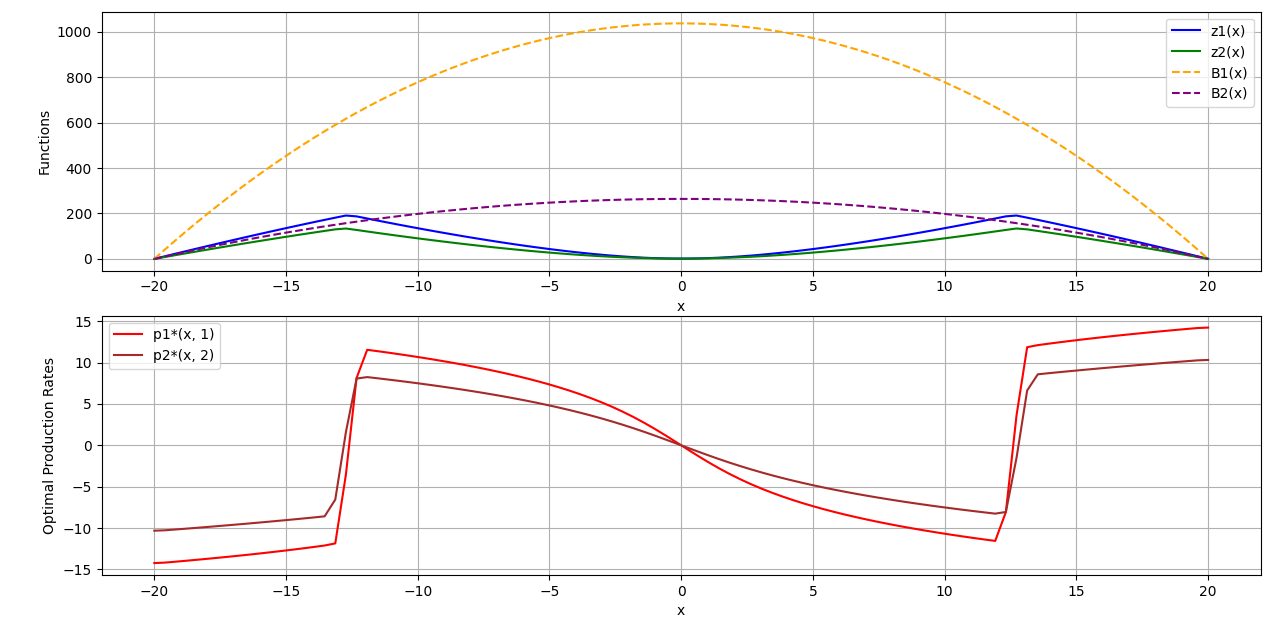}} %
\subfloat{\includegraphics[width=0.45\textwidth]{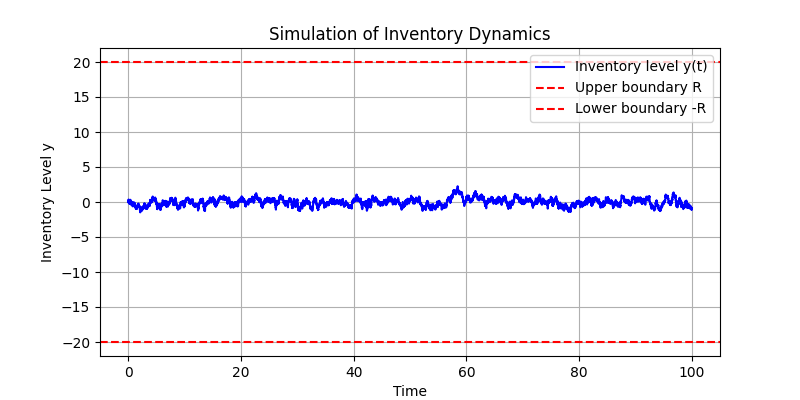}}
\caption{Caption describing the two figures.}
\label{fig:twoFigures3}
\end{figure}

\begin{equation*}
\end{equation*}
perfectly reflects the established outcomes.

The provided data 
\begin{equation*}
a_{1}=0.6\text{, }a_{2}=0.9\text{, }\alpha _{1}=0.3\text{, }\alpha _{2}=0.8%
\text{, }M_{1}=5\text{, }M_{2}=1\text{, }\sigma _{1}=1\text{, }\sigma
_{2}=0.3
\end{equation*}%
and%
\begin{equation*}
f_{1}\left( x\right) =5x^{2}\text{, }f_{2}\left( x\right) =x^{2},\text{ }%
x\in B_{10},
\end{equation*}%
in accordance with Theorem \ref{s4}, guarantees that the displayed plot%
\begin{equation*}
\end{equation*}

\begin{figure}[!ht]
\centering
\subfloat{\includegraphics[width=0.45\textwidth]{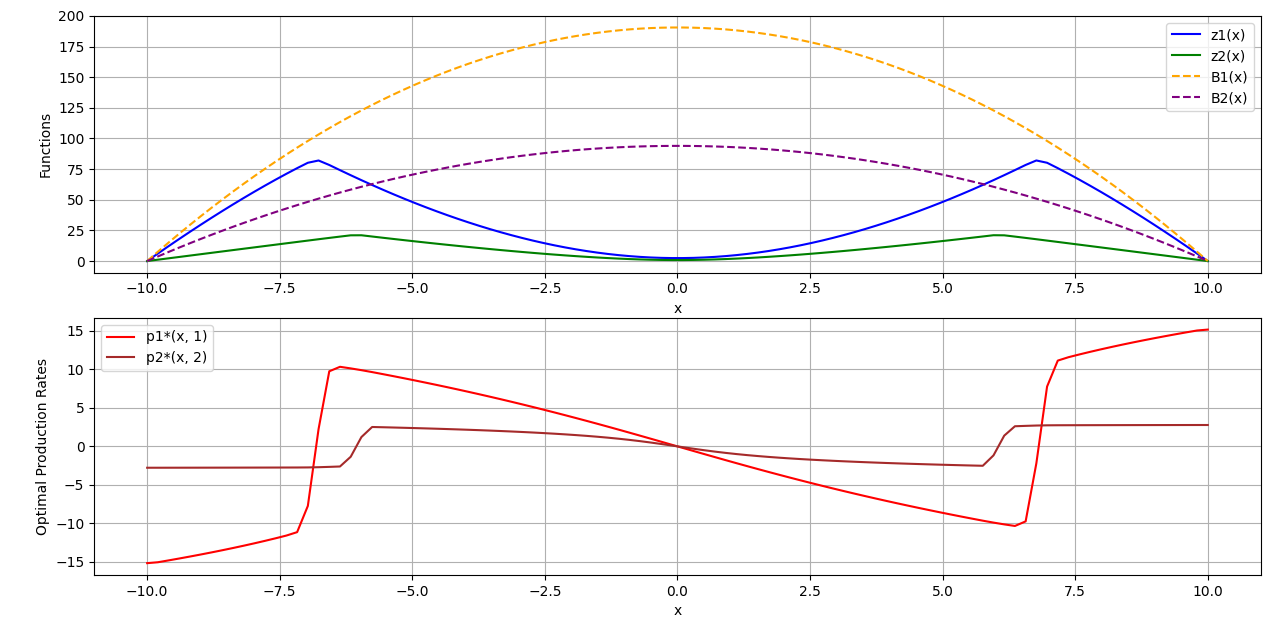}} %
\subfloat{\includegraphics[width=0.45\textwidth]{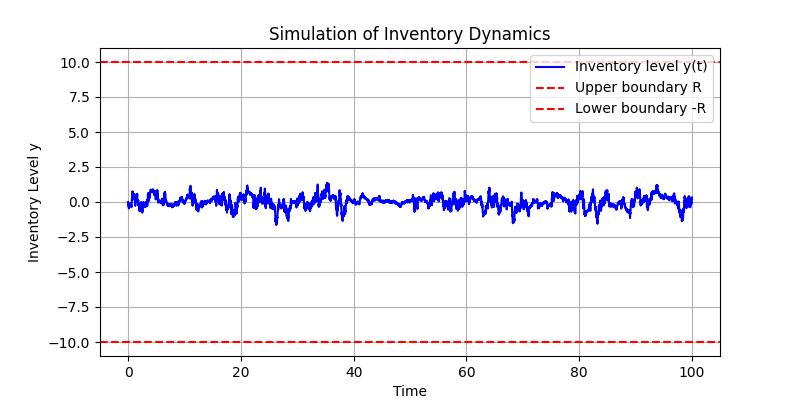}}
\caption{Caption describing the two figures.}
\label{fig:twoFigures4}
\end{figure}

\begin{equation*}
\end{equation*}
aligns seamlessly with the derived outcomes. It is unnecessary to plot the
functions $\bar{z}_{1}(x)$ and $\underline{z}_{2}(x)$ here, as the same
numerical algorithm is being repeated.

\begin{remark}
It is important to highlight that specific parameters require algorithmic
application, each with an associated margin of error. In all the above
considered scenarios, our theoretical results establish that: 
\begin{equation}
z_{1}\left( x\right) \leq B_{1}\left( x\right) \text{ and }z_{2}\left(
x\right) \leq B_{2}\left( x\right) \text{, for all }x\in \overline{B}_{R},
\label{vb}
\end{equation}%
where the inequalities serve as a foundational guideline. In cases where
these conditions are violated, the initial data must undergo adjustments to
ensure the value functions align with (\ref{vb}). With the parameters
explicitly defined, updating the Python code becomes easier, we only need to
adjust these values directly when model parameters change, rather than
re-running iterative numerical solvers.
\end{remark}

\section{Some Future Directions\label{7}}

This study introduces a stochastic production planning framework with regime
switching and provides qualitative insights into the sensitivity and model
analysis.

Future research directions include exploring alternative convex loss
functions, incorporating correlated Brownian motions, and modeling inventory
levels with geometric Brownian motion. Additionally, advancements in machine
learning could enable real-time detection of economic regimes, enhancing
adaptability and resilience.

While the current methodology is robust, there are several non-trivial
extensions worth exploring, along with potential approaches to tackle the
associated challenges:

\subsection{Alternative Convex Loss Functions}

One possible extension is to replace the quadratic loss function with other
convex loss functions, such as logarithmic or exponential functions, which
might better capture specific cost structures in certain industries.

Convex loss functions other than quadratic introduce non-linearities into
the Hamilton-Jacobi-Bellman (HJB) equations, complicating analytical and
numerical solutions.

\subsection{Correlated Brownian Motions}

The current model assumes independent Brownian motions driving the inventory
dynamics. An extension is to introduce correlations between these stochastic
processes to capture interdependencies among economic goods. Correlation
introduces mixed derivative terms in the PDE system, complicating analysis
and computation (see \cite{CDPCM}). Incorporating correlations makes the
model applicable to industries with interrelated goods or shared demand
drivers.

\subsection{Positivity with Geometric Brownian Motion}

An extension is to model inventory levels using geometric Brownian motion to
ensure positivity, which is essential in industries where negative inventory
levels are infeasible. Geometric Brownian motion introduces non-linear drift
and diffusion coefficients in the stochastic differential equations and HJB
system. Apply logarithmic transformations to convert multiplicative dynamics
into additive dynamics, simplifying the equations. Adaptive numerical
methods, such as mesh refinement techniques, can handle non-linear terms
effectively. This extension ensures realistic modeling of inventory levels
and enhances the applicability of the framework to various industries.

\section{Conclusion\label{8}}

The production planning problem is solved using a value function approach,
where the optimal production policy is characterized by a system of elliptic
PDEs. This paper aims to bridge the gap between theoretical modeling and
practical implementation, providing robust tools for stochastic production
planning under regime-switching parameters. The regime-switching framework
provides actionable insights for managerial decision-making, policy
analysis, and operational optimization.

The contributions of this study include: derivation of the
Hamilton-Jacobi-Bellman (HJB) equations and their transformation into an
elliptic PDE system, development of a monotone iteration scheme to
approximate solutions, enabling quantitative analysis of production
policies, investigation of the impacts of volatility, holding costs, and
discount rates on the value functions and comparison of models with and
without regime switching, highlighting the conservative and balanced
predictions of regime-switching models.

By adapting production strategies to economic cycles, minimizing costs, and
mitigating risks, the model enhances practical applicability in industries
such as automotive manufacturing, energy systems, and retail.

\section{Appendix\label{9}}

The numerical algorithm described in Section \ref{4} has been implemented
using Python, with assistance from Microsoft Edge Copilot, and is presented
below for the same data presented in Theorem \ref{s4}:

\bigskip 
\begin{lstlisting}
import numpy as np
import matplotlib.pyplot as plt
from scipy.optimize import fsolve

# Parameters
a1, alpha1 = 0.6, 0.3
a2, alpha2 = 0.9, 0.8
sigma1, sigma2 = 1, 0.3
M1, M2 = 5, 1
R = 10
epsilon = 1e-6
N = 100  # Number of grid points

# Step 1: Solve for K1 and K2 with automatic adjustment if needed
def solve_k_parameters(initial_guess):
    def equations(K):
        K1, K2 = K
        eq1 = 4 * K1**2 + (2 * (a1 + alpha1) * sigma1**2) / sigma1**4 * K1 - M1 / sigma1**4 - (2 * a1 * sigma2**2) / sigma1**4 * K2
        eq2 = 4 * K2**2 + (2 * (a2 + alpha2) * sigma2**2) / sigma2**4 * K2 - M2 / sigma2**4 - (2 * a2 * sigma1**2) / sigma2**4 * K1
        return [eq1, eq2]
    return fsolve(equations, initial_guess)

initial_guess = [-1, -1]
K1, K2 = solve_k_parameters(initial_guess)

# Loop until both K1 and K2 are negative
while K1 >= 0 or K2 >= 0:
    print(f"Incorrect values: K1 = {K1}, K2 = {K2}. Retesting with different estimates.")
    # Decrement the initial estimate to search for negative solutions
    initial_guess = [initial_guess[0] - 1, initial_guess[1] - 1]
    K1, K2 = solve_k_parameters(initial_guess)

# Additional calculations for S1 and S2
S1, S2 = np.exp(K1 * R**2), np.exp(K2 * R**2)


# Print K1 and K2 values
print(f"K1 = {K1}")
print(f"K2 = {K2}")

# Step 2: Define g1 and g2 functions
def g1(x, t, s):
    return (5*x**2 / sigma1**4) * t + (2 * (a1 + alpha1) / sigma1**2) * t * np.log(t) - (2 * a1 * sigma2**2 / sigma1**4) * t * np.log(s)

def g2(x, t, s):
    return (x**2 / sigma2**4) * s + (2 * (a2 + alpha2) / sigma2**2) * s * np.log(s) - (2 * a2 * sigma1**2 / sigma2**4) * s * np.log(t)

# Step 3: Compute Lambda1 and Lambda2
def g1_partial_t(x, t, s):
    return (5*x**2 / sigma1**4) + (2 * (a1 + alpha1) / sigma1**2) * (1 + np.log(t)) - (2 * a1 * sigma2**2 / sigma1**4) * np.log(s)

def g2_partial_s(x, t, s):
    return (x**2 / sigma2**4) + (2 * (a2 + alpha2) / sigma2**2) * (1 + np.log(s)) - (2 * a2 * sigma1**2 / sigma2**4) * np.log(t)

x_values = np.linspace(-R, R, 200)
Lambda1 = float("-inf")
Lambda2 = float("-inf")

for xi in x_values:
    for t_val in np.linspace(np.exp(K1 * (R**2 - xi**2)), 1, 50):
        for s_val in np.linspace(np.exp(K2 * (R**2 - xi**2)), 1, 50):
            value1 = g1_partial_t(xi, t_val, s_val)
            Lambda1 = max(Lambda1, value1)
            value2 = g2_partial_s(xi, t_val, s_val)
            Lambda2 = max(Lambda2, value2)

Lambda1 = -Lambda1
Lambda2 = -Lambda2

print(f"Lambda1 = {Lambda1}")
print(f"Lambda2 = {Lambda2}")

# Step 4: Successive approximation method using Finite Differences
def solve_system(u1, u2, Lambda1, Lambda2, max_iter=1000):
    for _ in range(max_iter):
        u1_old, u2_old = u1.copy(), u2.copy()
        for i in range(1, N-1):
            u1[i] = (g1(x[i], u1_old[i], u2_old[i]) + Lambda1 * u1_old[i] - (u1[i-1] + u1[i+1]) / (dx**2)) / (-2 / dx**2 + Lambda1)
            u2[i] = (g2(x[i], u1_old[i], u2_old[i]) + Lambda2 * u2_old[i] - (u2[i-1] + u2[i+1]) / (dx**2)) / (-2 / dx**2 + Lambda2)
        # Check for convergence
        if np.max(np.abs(u1 - u1_old)) < epsilon and np.max(np.abs(u2 - u2_old)) < epsilon:
            break
    return u1, u2

# Discretize the domain
x = np.linspace(-R, R, N)
dx = x[1] - x[0]

# Initialize u1 and u2
u1 = np.ones(N)
u2 = np.ones(N)

u1, u2 = solve_system(u1, u2, Lambda1, Lambda2)

# Value functions
z1 = -2 * sigma1**2 * np.log(u1)
z2 = -2 * sigma2**2 * np.log(u2)

# Optimal production rates
p1_star = -0.5 * np.gradient(z1, dx)
p2_star = -0.5 * np.gradient(z2, dx)

# Upper-Bounded Functions
B1 = -2 * sigma1**2 * K1 * (R**2 - x**2)
B2 = -2 * sigma2**2 * K2 * (R**2 - x**2)

# Plot the results
plt.figure(figsize=(10, 8))

# Plot z1(x) and z2(x) along with B1(x) and B2(x)
plt.subplot(2, 1, 1)
plt.plot(x, z1, label="z1(x)", color="blue")
plt.plot(x, z2, label="z2(x)", color="green")
plt.plot(x, B1, label="B1(x)", color="orange", linestyle="--")
plt.plot(x, B2, label="B2(x)", color="purple", linestyle="--")
plt.xlabel("x")
plt.ylabel("Functions")
plt.legend()
plt.grid()

# Plot p1*(x, 1) and p2*(x, 2)
plt.subplot(2, 1, 2)
plt.plot(x, p1_star, label="p1*(x, 1)", color="red")
plt.plot(x, p2_star, label="p2*(x, 2)", color="brown")
plt.xlabel("x")
plt.ylabel("Optimal Production Rates")
plt.legend()
plt.grid()

plt.tight_layout()
plt.show()

# -------------------------------------------------------------------
# Simulation of Inventory Dynamics (Euler-Maruyama)
# -------------------------------------------------------------------
dt = 0.01       # time step delta(t)
T_max = 100     # maximum simulation time
x0 = 0          # initial inventory: y(0) = x0
t = 0.0
y = x0
epsilon_regime = 1  # initial economic regime

# To store trajectory: list of tuples (t, y, epsilon)
inventory_traj = [(t, y, epsilon_regime)]

# Function to interpolate the optimal production rate based on the computed arrays
def optimal_production(y_value, regime):
    idx = np.argmin(np.abs(x - y_value))
    if regime == 1:
        return p1_star[idx]
    elif regime == 2:
        return p2_star[idx]
    else:
        raise ValueError("Regime unknown.")

# Function to update the economic regime based on transition probabilities
def update_regime(current_regime):
    r = np.random.rand()
    if current_regime == 1 and r < a1 * dt:
        return 2
    elif current_regime == 2 and r < a2 * dt:
        return 1
    return current_regime

# Euler-Maruyama simulation loop until |y(t)| >= R or T_max is reached
while np.abs(y) < R and t < T_max:
    epsilon_regime = update_regime(epsilon_regime)
    p_star = optimal_production(y, epsilon_regime)
    noise = np.random.normal(0, np.sqrt(dt))
    sigma_eps = sigma1 if epsilon_regime == 1 else sigma2
    y = y + p_star * dt + sigma_eps * noise
    t = t + dt
    inventory_traj.append((t, y, epsilon_regime))

inventory_traj = np.array(inventory_traj)
times = inventory_traj[:, 0]
inventory_levels = inventory_traj[:, 1]
regimes = inventory_traj[:, 2]

# Plot inventory level over time
plt.figure(figsize=(8, 4))
plt.plot(times, inventory_levels, label="Inventory level y(t)", color="blue")
plt.xlabel("Time")
plt.ylabel("Inventory Level y")
plt.title("Simulation of Inventory Dynamics")
plt.axhline(R, color='red', linestyle='--', label='Upper boundary R')
plt.axhline(-R, color='red', linestyle='--', label='Lower boundary -R')
plt.legend()
plt.grid()
plt.show()

print("Simulation ended at t =", t, "with y =", y, "and regime =", epsilon_regime)

\end{lstlisting}

\section{Acknowledgment.}

During the review process of the paper \cite{CCP}, the reviewers inquired
whether the obtained results could be implemented. At the time, we were
unable to provide a positive response to this issue. However, over time, the
question became a challenge that motivated me, and I have recently managed
to implement the results. I am grateful to the anonymous reviewer of the
paper \cite{CCP} for encouraging me to find a solution to the raised
question.

\section{Declarations}

\subparagraph{\textbf{Conflict of interest}}

The authors have no Conflict of interest to declare that are relevant to the
content of this article.

\subparagraph{\textbf{Ethical statement}}

The paper reflects the authors' original research, which has not been
previously published or is currently being considered for publication
elsewhere.

\bibliographystyle{plain} 
\bibliography{bibliography}
\end{document}